\newcounter{tmp}
\theoremstyle:=definition,remark,plain\do{%
        \expandafter\g@addto@macro\csname th@\theoremstyle\endcsname{%
            \addtolength\thm@preskip\parskip
            }%
        }
\z@skip \halign{\relax\hfil\txtline@@{##}\hfil\cr\leavevmode#1\crcr}}}
\theoremstyle{definition}
\newtheorem{thm}{Theorem}[section]
\newtheorem{lem}[thm]{Lemma}
\newtheorem{cor}[thm]{Corollary}
\newtheorem{defn}[thm]{Definition}
\newtheorem{propn}[thm]{Proposition}
\newtheorem*{thm*}{Theorem}
\newtheorem*{qn*}{Question}
\newtheorem{props}[thm]{Properties}
\theoremstyle{remark}
\newtheorem{rk}[thm]{Remark}
\newtheorem{ex}[thm]{Example}
\newtheoremstyle{custthm}{\parskip}{}{\normalfont}{}{\bfseries}{.}{ }{\thmname{#1} \thmnote{#3}}
\theoremstyle{custthm}
\newcommand{\inn}{\mathrm{inn}}
\newcommand{\Aut}{\mathrm{Aut}}
\newcommand{\udim}{\mathrm{udim}}
\newcommand{\rudim}{\mathrm{r.udim}}
\renewcommand{\O}{\mathcal{O}}
\renewcommand{\c}{\mathbf{c}}
\renewcommand{\d}{\mathbf{d}}
\begin{document}

\numberwithin{equation}{section}
\binoppenalty=\maxdimen
\relpenalty=\maxdimen

\title{Filtered skew derivations on simple artinian rings}
\author{Adam Jones, William Woods}
\date{\today}
\maketitle
\begin{abstract}
\noindent Given a complete, positively filtered ring $(R,f)$ and a compatible skew derivation $(\sigma,\delta)$, we may construct its skew power series ring $R[[x;\sigma,\delta]]$. Due to topological obstructions, even if $\delta$ is an \emph{inner} $\sigma$-derivation, in general we cannot ``untwist" it, i.e. reparametrise to find a filtered isomorphism $R[[x; \sigma, \delta]] \cong R[[x'; \sigma]]$, as might be expected from the theory of skew polynomial rings; similarly when $\sigma$ is an inner automorphism. We find general conditions under which it is possible to untwist the multiplication data, and use this to analyse the structure of $R[[x;\sigma,\delta]]$ in the simplest case when $R$ is a matrix ring over a (noncommutative) noetherian discrete valuation ring.
%
\end{abstract}

\tableofcontents

\newpage
\section{Introduction}

Let $R$ be a ring and $(\sigma,\delta)$ a skew derivation on $R$: that is, $\sigma$ is an automorphism of $R$, and $\delta$ is a left $\sigma$-derivation on $R$, i.e. a linear map $R\to R$ satisfying $\delta(ab) = \delta(a)b + \sigma(a)\delta(b)$ for all $a,b\in R$. Then we may define the \emph{skew polynomial ring} $R[x;\sigma,\delta]$ as the unique ring which is equal to $R[x]$ as a left $R$-module and whose multiplication is given by $xr = \sigma(r)x + \delta(r)$ for all $r\in R$.

In the theory of skew polynomial rings (see \S \ref{subsec: reparametrising skew polynomials} below), it is well known that, if $\sigma$ is an \emph{inner} automorphism of $R$, then there exists some $x'\in R[x;\sigma,\delta]$ such that $R[x;\sigma,\delta] = R[x';\delta']$ for some derivation $\delta'$; and likewise, if $\delta$ is an \emph{inner} $\sigma$-derivation of $R$, then there exists some $x'\in R[x;\sigma,\delta]$ such that $R[x;\sigma,\delta] = R[x';\sigma]$. This is a crucial and frequently used simplification in the theory: see e.g. \cite[\S 2.3(iii), Theorem 14.9]{GooLet94} or \cite[Proposition 3.10]{Goo92}.

We are primarily interested in skew \emph{power series} rings, where there are many extra topological difficulties to deal with.

Firstly: given an arbitrary ring $R$ and an arbitrary skew derivation $(\sigma, \delta)$ on $R$, it is in general not true that there exists a well-defined multiplication of the above form on the left module $R[[x]]$ without imposing some kind of convergence condition on the multiplication data. Our primary motivation comes from studying the completed group algebras of certain finite-rank pro-$p$ groups (these completed group algebras are also known as \emph{Iwasawa algebras}), where the following notions are appropriate. If $(R,v)$ is a complete, $\mathbb{N}$-filtered ring, and $(\sigma,\delta)$ is compatible with $v$ (in the sense of Definition \ref{defn: compatible} below), then we can define the \emph{skew power series ring}
$$R[[x; \sigma, \delta]] = \left\{ \sum_{n\geq 0} r_n x^n : r_n\in R\right\},$$
which is also a complete $\mathbb{N}$-filtered ring. (If $v$ has values in $\mathbb{Z}\cup\{\infty\}$ rather than $\mathbb{N}\cup\{\infty\}$, then we can define an appropriate notion of \emph{bounded} skew power series ring -- see \cite{jones-woods-1} for details -- but we do not deal with such rings in this paper.)

Secondly: suppose that there exists $t\in R$ such that $\delta(r) = tr - \sigma(r)t$, an \emph{inner} $\sigma$-derivation. Then, if we reparametrise the skew polynomial ring by changing our variable from $x$ to $x' = x + t$, we find that $R[x;\sigma,\delta] = R[x';\sigma]$: in passing from $x$ to $x'$, we will say that we have \emph{untwisted} $\delta$ from $R[x;\sigma,\delta]$. This is beneficial as rings of the form $R[x';\sigma]$ (with zero derivation) are typically much easier to understand. (There is a similar procedure by which inner automorphisms $\sigma$ can be \emph{untwisted}.) However, under a reparametrisation like this, it is generally not true that we will have $R[[x]] = R[[x']]$ even as left $R$-modules (see Example \ref{ex: when reparametrisation doesn't work}), meaning that this simplification is not always available.

\subsection{Maximal orders in semisimple artinian rings}

The main result of this paper uses this notion of untwisting to analyse the structure of skew power series rings. To state this result, we need to impose further conditions on the base ring, since general filtered rings are too pathological for us to be able to say much of consequence.

The rings of interest, which we denote by $\O$, will typically be specific maximal orders in certain complete, filtered semisimple artinian rings $Q$. These are often well-behaved enough that inner parts of the multiplication data $(\sigma,\delta)$ can \emph{always} be untwisted from $\O[[x;\sigma,\delta]]$, making a study of these skew power series rings tractable using our methods.

Rings of this form are highly abundant, since beginning with a sufficiently nice filtered ring $R$, it is possible to produce such a ring $\O$ which is closely related to $R$ due to \cite[\S 3, Theorem C and proof]{ardakovInv}. For instance, the authors of the present paper proved the results of \cite{jones-woods-1} by relating skew power series rings over $R$ to skew power series rings over $Q(\O)$.

In short, we will take $Q$ to be a semisimple artinian ring throughout, and we will assume that it is complete with respect to a filtration $v_Q$. Naturally, by the Artin-Wedderburn theorem, $Q$ is isomorphic to a finite direct product of full matrix rings over division rings $F_1,\cdots,F_d$, and we will usually construct our maximal order $\O$ in $Q$ by simply taking maximal orders in the $F_i$. More specifically, we will assume that the data $(Q,\O,v_Q)$ satisfies some or all of the following hypotheses:

\textbf{Hypotheses.}

\begin{enumerate}[label=(H\arabic*),noitemsep]
\item We can realise $Q$ as a product $Q=A_1\times\cdots\times A_d$, where the rings $A_1,\dots,A_d$ form the minimal non-zero ideals of $Q$, $\mathcal{O}=\O_1\times\cdots\times\O_d$ for some maximal order $\O_i$ in $A_i$. and for each $i = 1, \dots, d$, we are given
\begin{itemize}
\item a complete \emph{discrete valuation ring} $D_i$ (defined as in \S \ref{subsec: DVRs} below),
\item its Goldie ring of quotients $F_i$ (with its induced filtration $v_{F_i}$: see \S \ref{subsec: DVRs}),
\item the full matrix rings $M_n(D_i) \subseteq M_n(F_i)$ (with the matrix filtration $M_n(v_{F_i})$: see Definition \ref{defn: filtrations}.2),
\item filtered ring isomorphisms $\iota_i: A_i \to M_n(F_i)$ such that $\O_i = \iota_i^{-1}(M_n(D_i))$.
\end{itemize}
In this context, we will write $D$ and $F$ for the products of the $D_i$ and $F_i$ respectively, and they will be given their respective product filtrations (see Definition \ref{defn: filtrations}.1), which we will sometimes denote $v_D$ and $v_F$.

It will also sometimes be convenient to identify $M_n(F) = M_n(F_1) \times \dots \times M_n(F_d)$. We will write $\iota: Q \to M_n(F)$ for the induced filtered isomorphism, and we assume that $v_Q=M_n(v_F)\circ\iota$.
\item The skew derivation $(\sigma,\delta)$ is \emph{compatible} with $v_Q$ (defined as in \S \ref{subsec: defining skew power series rings} below).
\item The automorphism $\sigma$ permutes the minimal nonzero ideals $A_1, \dots, A_d$ of $Q$ transitively.
\end{enumerate}

It will additionally be convenient to name the following hypothesis:

\begin{enumerate}
\item[(S)] In the context of (H1), $d = 1$: that is, $Q$ is \emph{simple} artinian, $F$ is a division ring, $D$ is a complete discrete valuation ring, etc.
\end{enumerate}

\textit{Remarks.}
\begin{itemize}[noitemsep]
\item Hypotheses (H1) + (S) are the context of \cite[\S 3, particularly 3.14]{ardakovInv}: our $Q$ is there called $Q(B)$, and it can be realised as a simple quotient of the artinian ring called $\widehat{Q}$.
\item It follows from (H1) that $v_F$ is the $J(D)$-adic filtration on $F$, and hence $v_Q$ is the $J(\mathcal{O})$-adic filtration on $Q$.
\item Hypothesis (H2) is a crucial hypothesis when working with filtered skew power series rings: many natural and important examples of filtered skew power series rings satisfy some kind of compatibility criterion (see e.g. \cite{jones-woods-1,letzter-noeth-skew,venjakob}, \cite[\S\S 2.4--2.5]{woods-SPS-dim}), and this compatibility criterion ensures that the ring multiplication is well defined (see \S\ref{subsec: reparametrising skew polynomials}). Note that, in particular, together with (H1) it implies that $\sigma$ preserves $\O$.
\item Hypothesis (H3) is a mild simplification. In fact, our results can also deal with the more general case where $Q \cong \prod_{i=1}^d M_{n_i}(F_i)$ (note that the $n_i$ may be different!) and $\sigma$ has multiple orbits, by applying the techniques of \S \ref{subsec: reducing to orbits} to reduce easily to a case satisfying (H3).
\end{itemize}

Assuming these hypotheses, our main result allows us to realise the skew power series ring $\O[[x;\sigma,\delta]]$ in a form that allows us to reduce to the study of skew-power series rings over the division rings $D_i$, a much less daunting task:

\begingroup
\setcounter{tmp}{\value{thm}}
\setcounter{thm}{0} 
\renewcommand\thethm{\Alph{thm}}

\begin{thm}\label{A}
If $(Q,\O,v_Q)$ satisfy hypotheses (H1-3), with $F,D,v_F,\iota$ defined as in the statements of the hypotheses, then there exists a skew derivation $(\tau,\theta)$ on $F$, compatible with $v_F$, and an isomorphism of filtered rings $\varphi: \O[[x;\sigma,\delta]] \to M_n(D[[y;\tau,\theta]])$ extending $\iota|_{\O}$, where $y$ is the image of $ax-t$ for some $a\in \O^\times$, $t\in J(\O)$.
\end{thm}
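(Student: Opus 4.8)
The plan is to reduce the problem in stages, peeling off each of the three hypotheses in turn, and at each stage applying an untwisting result to simplify the multiplication data until we land on a skew power series ring over $D$ with matrix coefficients.

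First I would exploit the hypothesis (H3) that $\sigma$ acts transitively on the minimal ideals $A_1, \dots, A_d$, together with the reduction-to-orbits techniques referenced in \S \ref{subsec: reducing to orbits}. The idea is that although $Q$ decomposes as a $d$-fold product, the transitivity of the $\sigma$-action lets us identify $\O[[x;\sigma,\delta]]$ with a skew power series ring over $\O_1$ (the maximal order in a \emph{single} simple factor $A_1$) for a suitably modified skew derivation coming from the $d$-th power of the action that stabilises $A_1$. In effect this collapses the general (H1--3) situation to the case (S) where $Q$ is simple artinian, so after this step I may assume $Q \cong M_n(F)$ with $F$ a division ring and $\O \cong M_n(D)$.

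Next, working inside $M_n(D)$, I would untwist the inner parts of the skew derivation. The automorphism $\sigma$ of $M_n(F)$ decomposes, via Skolem--Noether, into an inner automorphism (conjugation by some unit) composed with an automorphism $\tau$ induced coordinatewise from an automorphism of $D$; the element $a \in \O^\times$ in the statement is precisely the unit implementing this inner part. The plan is to use the untwisting results of \S \ref{subsec: reparametrising skew polynomials} to absorb this inner automorphism into a reparametrisation $x \mapsto ax$, and similarly to absorb the inner part of $\delta$ (which should be an inner $\sigma$-derivation after the $\sigma$-part has been normalised) by a further additive shift $x \mapsto ax - t$ with $t \in J(\O)$. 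This is exactly the variable $y$ named in the theorem. The delicate point is that, as emphasised in the introduction and Example \ref{ex: when reparametrisation doesn't work}, untwisting is \emph{not} automatic in the power series setting: one must verify that the reparametrisation is topologically well-behaved, i.e. that $a \in \O^\times$ and $t \in J(\O)$ so that the change of variable converges and defines a filtered isomorphism onto its image. This is where the strong hypotheses (H1) and (H2)---that $v_F$ is $J(D)$-adic and that $(\sigma,\delta)$ is compatible---do the essential work, guaranteeing the convergence conditions hold.

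The main obstacle I expect is precisely this verification that the untwisting can be carried out \emph{filteredly} and \emph{convergently} on the power series ring, rather than just formally on polynomials. Concretely, I would need to check that after conjugating $\sigma$ to the diagonal form $\tau$ and shifting away the inner derivation, the resulting $(\tau, \theta)$ is genuinely a skew derivation of $F$ compatible with $v_F$, so that $D[[y;\tau,\theta]]$ is itself a well-defined complete filtered ring; and that the matrix structure commutes with all of this, yielding $M_n(D[[y;\tau,\theta]]) \cong M_n(D)[[y;\tau,\theta]]$ as filtered rings. Assembling these pieces, the composite map $\varphi$ is built from $\iota|_{\O}$ on coefficients together with the reparametrisation $x \mapsto y = ax - t$, and a final check that $\varphi$ preserves the filtrations on both sides---using $v_Q = M_n(v_F) \circ \iota$ from (H1)---completes the identification as an isomorphism of filtered rings.
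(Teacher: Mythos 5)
Your first step is where the proposal breaks: the claim that hypothesis (H3) lets you ``collapse'' the transitive case to the simple case (S), replacing $Q$ by $A_1$ and working over $\O_1$ with the stabilising power $\sigma^d$, is not merely unproved but false. The orbit-reduction machinery of \S\ref{subsec: reducing to orbits} (Proposition \ref{propn: reducing to sigma-orbits}) splits $R[[x;\sigma,\delta]]$ only along $\sigma$-\emph{stable} product decompositions, i.e.\ along unions of orbits; under (H3) there is a single orbit, so there is nothing left to split, and no skew power series ring over $\O_1$ appears. Concretely, take $d=2$, $n=1$, $\O = D_1\times D_2$ with $D_1 = D_2$ a complete discrete valuation ring, $\delta = 0$ and $\sigma$ the swap of the two factors. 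Then $S := \O[[x;\sigma]]$ has zero divisors (the idempotent $e_1=(1,0)$ satisfies $e_1(1-e_1)=0$), so $S$ is not of the form $D_1[[y;\tau',\theta']]$, which is a domain; and since $x\O = \O x$ makes $xS$ two-sided with $S/xS \cong D_1\times D_2$, the ring $S$ has a division-ring quotient $D_1/J(D_1)$, whereas every quotient of $M_m(T)$ is $M_m(T/I)$ and hence never a division ring for $m\geq 2$. So $S \cong M_m(D_1[[y;\tau',\theta']])$ is impossible for every $m$. This is exactly why Theorem \ref{A} is stated with $F = F_1\times\dots\times F_d$ and $D = D_1\times\dots\times D_d$ kept as \emph{products}, with $\tau$ permuting the factors: the paper never reduces to a division-ring base. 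The correct move, which your plan skips, is Property \ref{props: cauchon-robson facts about sigma and delta}.2--3 (Cauchon--Robson) applied over the product: transitivity forces $n_1=\dots=n_d=n$, giving $Q \cong M_n(F)$ with $F$ a product of division rings, and $\sigma = \c_a \circ M_n(\tau)^\iota$, $\delta' = M_n(\theta)^\iota + \d_{M_n(\tau)^\iota,u}$ with $\tau,\theta$ living on $F$ itself. (Your appeal to Skolem--Noether in the second stage implicitly assumes $Q$ is already simple, which is the same error.)

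There is a second, subtler gap in your untwisting stage. You correctly identify that Theorem \ref{B} demands $v(a)=v(a^{-1})=0$ and $v(t)\geq 1$, but you assert that (H1)--(H2) ``guarantee the convergence conditions hold.'' They do not, and this is the technical heart of the paper's proof. The inner data produced by the Cauchon--Robson decomposition is only determined up to central renormalisation -- e.g.\ $\c_a = \c_{\tilde{\pi}^r a}$ for any power of the uniformiser, so the conjugating element can have arbitrary value -- and one must \emph{construct} representatives satisfying the value constraints. The paper does this in Proposition \ref{propn: reducing to case sigma = M_n(tau)} (writing $a_j = b_j\tilde{\pi}_j^{k_j}$ and proving $b_j$ is a unit of value zero, using that nonzero ideals of $D_j$ are powers of $J(D_j)$ together with normality of $\pi_j$, then absorbing $\Pi = (\pi_1^{k_1}I_1,\dots,\pi_d^{k_d}I_d)$ into $\tau$) and in Proposition \ref{propn: can untwist inner derivations given a matrix filtration} (a matrix-unit computation showing $u \equiv u_{11}1_R \bmod F_1R$, so that subtracting the scalar $u_{11}1_R$ -- absorbed into $\theta$ -- leaves $u'$ with $f(u')\geq 1$). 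Your remaining steps (the reparametrisations $x\mapsto ax$ and $x\mapsto x-t$ via Theorem \ref{B}, and the final rearrangement $M_n(D)[[y;M_n(\tau),M_n(\theta)]] \cong M_n(D[[y;\tau,\theta]])$) do match the paper's route, but without the two renormalisation propositions the plan does not go through.
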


\endgroup

Note that this statement makes sense, because if $(\tau,\theta)$ is compatible with $v_F$, which is the $J(D)$-adic filtration, then it follows that $\tau$ and $\theta$ preserve $D$, and hence $(\tau,\theta)$ restricts to a compatible skew-derivation of $D$.

It also follows from Theorem A that the Krull dimension of $\O[[x;\sigma,\delta]]$ is equal to the Krull dimension of $D[[y;\tau,\theta]]$, which is $2$, by similar methods to those of \cite[\S 3.1 and Theorem 3.3]{woods-SPS-dim}.

\subsection{Untwisting skew derivations}\label{subsec: reparametrising skew polynomials}

In order to prove our main result, we first find general conditions under which inner parts of the multiplication data $(\sigma,\delta)$ may be untwisted from $R[[x;\sigma,\delta]]$:

\textbf{Notation.} Given a ring $R$ and an invertible element $a\in R^\times$, we will write $\c_a$ for the inner automorphism of $R$ defined by $\c_a(r) = ara^{-1}$ for all $r\in R$. Also, given an element $t\in R$, we will write $\d_{\sigma,t}$ for the \emph{inner $\sigma$-derivation} of $R$ defined by $\d_{\sigma,t}(r) = tr - \sigma(r)t$ for all $r\in R$.

Let $R$ be a ring and $(\sigma,\delta)$ a skew derivation on $R$, and fix $a\in R^\times$ and $t\in R$. In the study of skew polynomial rings, it is often useful to \emph{reparametrise} the ring $R[x;\sigma,\delta]$, i.e. replace the variable $x$ with a new, more convenient variable $y\in R[x;\sigma,\delta]$, usually taken to be $y = ax$ or $y = x - t$. It is easy to see that $R[ax] = R[x-t] = R[x]$ as $R$-modules, and a calculation of the multiplication data shows that
$$(ax)r = a(\sigma(r)x + \delta(r)) = (a\sigma(r)a^{-1})(ax) + a\delta(r)$$
and
$$(x-t)r = \sigma(r)x + \delta(r) - tr = \sigma(r)(x-t) + \delta(r) - (tr - \sigma(r)t),$$
from which we can conclude that
\begin{itemize}
\item $R[x;\sigma,\delta] = R[ax; \c_a \sigma, a\delta]$,
\item $R[x;\sigma,\delta] = R[x-t; \sigma, \delta - \d_{\sigma,t}]$
\end{itemize}
as rings. This reparametrisation is a powerful tool in the study of skew polynomial rings, as it effectively implies that \emph{inner} automorphisms and $\sigma$-derivations can be ``untwisted" to become \emph{trivial}.

In the case of filtered skew power series rings $R[[x;\sigma,\delta]]$, we can no longer reparametrise arbitrarily due to the topology: that is, given a prospective new variable $y\in R[[x;\sigma,\delta]]$, it is no longer clear when $R[[x]] = R[[y]]$ as modules. Our second main result gives clear and broadly applicable sufficient conditions.

\begingroup
\setcounter{tmp}{\value{thm}}
\setcounter{thm}{1} 
\renewcommand\thethm{\Alph{thm}}

\begin{thm}\label{B}
Let $(R,v)$ be a complete filtered ring, and suppose that $(\sigma,\delta)$ is a compatible skew derivation on $R$. Fix $a\in R^\times$ and $t\in R$.

\begin{enumerate}[label=(\roman*)]
\item If $v(a) = v(a^{-1}) = 0$, then $R^b[[x; \sigma, \delta]] = R^b[[ax; \c_a\sigma, \c_a\delta]]$ as filtered rings.
\item If $v(t) \geq 1$, then $R^b[[x; \sigma, \delta]] = R^b[[x-t; \sigma, \delta + \d_{\sigma,t}]]$ as filtered rings.
\end{enumerate}
\end{thm}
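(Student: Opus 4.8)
The plan is to handle (i) and (ii) uniformly in three stages. Observe first that $R^b[[ax;\dots]]$ and $R^b[[x-t;\dots]]$ denote the \emph{same} underlying complete filtered ring $R^b[[x;\sigma,\delta]]$, merely re-presented with a new variable $y$ (equal to $ax$, resp. $x-t$); thus the assertion splits into two genuinely separate components. Algebraically, I must verify that $y$ satisfies a commutation relation of skew-power-series type over $R$ with the new data displayed in the statement, and that this new data is again a \emph{compatible} skew derivation, so that the right-hand symbol is a legitimate skew power series ring. Topologically, I must verify that $\{y^n\}_{n\ge 0}$ is a topological free basis of $R^b[[x;\sigma,\delta]]$ as a left $R$-module, so that every element admits a unique convergent expansion $\sum_n s_n y^n$ with $s_n \in R$; this is what makes the equality of filtered modules, and hence of filtered rings, meaningful.

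The algebraic stage is the calculation already recalled for skew polynomials, carried out formally inside the power series ring. In (i), writing $x = a^{-1}y$ in $yr = a\sigma(r)x + a\delta(r)$ exhibits the new automorphism as $\c_a\sigma$ and the constant term as the new derivation of the statement; in (ii), substituting $x = y+t$ into $yr = \sigma(r)x + \delta(r) - tr$ keeps the automorphism $\sigma$ and shifts the derivation by the inner $\sigma$-derivation $\d_{\sigma,t}$. The hypotheses are exactly what guarantee compatibility of the new data with $v$. For (i), $v(a) = v(a^{-1}) = 0$ forces $v(\c_a(r)) = v(r)$, so $\c_a\sigma$ preserves $v$, and the same bound shows the new derivation still raises $v$ by the required increment; for (ii), $v(t) \ge 1$ together with $v(\sigma(r)) = v(r)$ gives $v(\d_{\sigma,t}(r)) \ge v(r) + 1$, so $\d_{\sigma,t}$ is itself compatible and may be added to $\delta$.

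For the topological stage I would pass to the associated graded ring, where the change of variable becomes transparent. In both cases $y$ has the same filtration degree as $x$, and its leading symbol is $\gr(a)\,\gr(x)$ in (i) (with $\gr(a)$ a unit of $\gr R$ of degree $0$) and $\gr(x)$ or $\gr(x) - \gr(t)$ in (ii). Expanding $y^n$ then shows $\gr(y^n) = \gr(c_n)\,\gr(x)^n + (\text{strictly lower powers of }\gr(x))$, where the diagonal coefficient $\gr(c_n)$ is a unit (indeed $1$ in (ii)); in other words the transition between the families $\{\gr(x)^n\}$ and $\{\gr(y^n)\}$ is triangular with invertible diagonal, so $\{\gr(y^n)\}$ is a graded free basis of $\gr R^b[[x;\sigma,\delta]]$ over $\gr R$. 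Since the $R$-linear substitution map $y^n \mapsto y^n$ is filtered and induces this isomorphism on associated graded modules, and since both modules are complete and separated for their filtrations, the standard lifting lemma upgrades it to a filtered $R$-module isomorphism; combined with the commutation relation this yields the claimed equality of filtered rings.

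I expect the topological stage to be the only real obstacle, the algebra being identical to the polynomial case. The delicate point is that re-expanding a series in $x$ as a series in $y$ generates infinitely many lower-order corrections -- coming from $\delta$ in (i) and from the shift $t$ in (ii) -- and one must ensure these resum convergently and filtration-preservingly in both directions; this is precisely what fails in general (cf. Example \ref{ex: when reparametrisation doesn't work}) and what the hypotheses $v(a) = v(a^{-1}) = 0$ and $v(t) \ge 1$ rule out. Passing to $\gr$ is attractive because it isolates the leading symbols and reduces convergence to the formal statement that a filtered map between complete separated modules inducing a graded isomorphism is itself an isomorphism; the residual care is simply to confirm that the diagonal of the graded transition is a unit, which is where these two numerical hypotheses are used.
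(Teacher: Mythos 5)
Your proposal is correct, and its algebraic stage (the commutation relations and the compatibility checks for the new data) is essentially verbatim what the paper does in \S\ref{subsec: reparametrising skew polynomials}; but your topological stage takes a genuinely different route. The paper never passes to the associated graded ring: writing $(x-t)^n = \sum_i \beta_{n,i}x^{n-i}$ and $(ax)^n = \sum_i \gamma_{n,i}x^{n-i}$, it proves $v(\beta_{n,i}) \geq i$ and $v(\gamma_{n,i}) \geq i$ by induction on explicit recursions (Lemmas \ref{lem: values of structure constants, additive case} and \ref{lem: values of structure constants, multiplicative case}), deduces the one-sided estimate $f_{v,x}(p(y)) \geq f_{v,x}(p(x))$ for $y = x-t$ or $y = ax$ (Lemmas \ref{lem: subtracting t from the variable} and \ref{lem: multiplying the variable by a}), and then obtains the reverse inequality by a symmetrization trick: apply the same estimate to the \emph{inverse} substitution ($-t$, resp.\ $a^{-1}$) and chain the four (in)equalities (Propositions \ref{propn: additive reparametrisation} and \ref{propn: multiplicative reparametrisation}), so that $f_{v,x} = f_{v,y}$ already on the dense polynomial subring, and completion gives $R[[x]] = R[[y]]$ as filtered modules (Theorem \ref{thm: reparametrisation conditions}). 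Your graded argument replaces the symmetrization by triangularity-with-unit-diagonal plus the standard lifting lemma, and it does work: positivity of $v$ makes the degree-$m$ graded piece of $(R[[x]], f_{v,x})$ the \emph{finite} sum $\bigoplus_{i=0}^{m} \gr_{m-i}(R)\,\gr(x)^i$, so your transition matrix is genuinely finite triangular, with diagonal entries $\overline{\gamma_{n,0}} = \overline{a\sigma(a)\cdots\sigma^{n-1}(a)}$, units of $\gr_0 R$ precisely because $v(a) = v(a^{-1}) = 0$ -- this is where your route spends the hypothesis that the paper spends by substituting $a^{-1}x$ -- and strictness of the lifted isomorphism yields $f_{v,x} = f_{v,y}$ with no reverse substitution needed. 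Three points to make explicit when writing this up. First, the triangularity you assert by ``expanding $y^n$'' is exactly the content of the paper's two structure-constant lemmas; in your framework it even has a one-line proof, since $f_{v,x}$ is already known to be a ring filtration on $R[[x;\sigma,\delta]]$ and $f_{v,x}(y) \geq 1$ in both cases, whence $f_{v,x}(y^n) \geq n$, which is literally the statement $v(\beta_{n,i}) \geq i$. Second, your degreewise finiteness genuinely needs the positive filtration assumed throughout \S\ref{sec: reparametrising} (for $\mathbb{Z}$-filtrations the graded pieces are infinite and the triangular argument requires more care), so state that hypothesis. Third, a cosmetic remark: your calculation produces the new derivation $a\delta$ in (i) and $\delta - \d_{\sigma,t}$ in (ii), in agreement with \S\ref{subsec: reparametrising skew polynomials}; the $\c_a\delta$ and $\delta + \d_{\sigma,t}$ appearing in the theorem statement are the paper's own notational slips, not an error on your side.
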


\endgroup

(By the phrase ``as filtered rings" here, we mean that they are equal as rings, and that the standard filtrations as defined in (\ref{eqn: x-filtration on SPS ring}) are equal: i.e. in the notation of (\ref{eqn: x-filtration on SPS ring}), we have $f_{v,x} = f_{v,ax}$ in part (i) and $f_{v,x} = f_{v,x-t}$ in part (ii).)

This is proved at the end of \S \ref{sec: reparametrising}.

\subsection{Ideal contraction and simplicity}

In $\S 5$ we will prove some results that follow as consequences from our main theorems. The first of these addresses the following question: when is $R[[x;\sigma,\delta]]$ a simple ring?

Let $R \subseteq S$ be rings. Then we will say that an ideal $I\lhd S$ is \emph{$R$-disjoint} if $I\cap R = 0$.

Let $R$ be a simple ring and $(\sigma,\delta)$ a skew derivation on $R$. It is often useful to ask when $R[x;\sigma,\delta]$ is also a simple ring: see e.g. \cite[\S 3]{CozFai08} or \cite{oinert-richter-silvestrov}. This is clearly equivalent to the statement that $R[x;\sigma,\delta]$ has no nonzero $R$-disjoint ideals. However, the ideal generated by $x$ is a nonzero $R$-disjoint ideal in the case when $\delta = 0$, and similarly -- by untwisting -- there exist nonzero $R$-disjoint ideals more generally when $\delta$ is inner. This suggests that inner derivations have a role to play in the simplicity of $R[x;\sigma,\delta]$.

The correct generalisation of ``inner" is as follows. The following are equivalent \cite[Theorem 2.6, Corollary 2.7]{LerMat92}:
\begin{enumerate}[label=(\roman*),noitemsep]
\item $R[x;\sigma,\delta]$ has nonzero $R$-disjoint ideals,
\item $\delta$ is a \emph{quasi-algebraic} $\sigma$-derivation, i.e. there exists an endomorphism $\theta$ of $R$, an inner $\theta$-derivation $D$ of $R$, and elements $0 \neq a_n, a_{n-1}, \dots, a_1, b\in R$ (for some $n \geq 1$) such that
$$a_n \delta^n(r) + a_{n-1} \delta^{n-1}(r) + \dots + a_1\delta(r) = bD(r)$$
for all $r\in R$. (In fact, $n$ and $\theta$ can be chosen so that $\theta = \sigma^n$ \cite[\S 2]{LerMat92}.)
\end{enumerate}
There are also equivalent conditions phrased in the language of \emph{invariant} and \emph{semi-invariant polynomials}. Many further such results, and references to the historical literature on these matters, are given in \cite{LamLerLeu89,LerMat92}. See \cite[Theorem 3.4]{irving2} or \cite[\S3]{CisFerGon90} for examples of the usefulness of conditions involving $R$-disjoint ideals.

\begingroup
\setcounter{tmp}{\value{thm}}
\setcounter{thm}{2} 
\renewcommand\thethm{\Alph{thm}}

\begin{thm}\label{C}
If we assume that the data $(Q,\O,v_Q)$ satisfies Hypotheses (H1--3) + (S), then $\O[[x;\sigma,\delta]]$ has no nonzero $\O[x;\sigma,\delta]$-disjoint ideals. It follows that if $Q[x;\sigma,\delta]$ is a simple ring, then $Q\otimes_{\O} \O[[x;\sigma,\delta]]$ is a simple ring.
\end{thm}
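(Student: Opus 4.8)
The plan is to push everything through the isomorphism of Theorem~A and reduce to a one-variable Weierstrass statement. Write $\varphi\colon \O[[x;\sigma,\delta]]\to M_n(S)$ for the isomorphism of Theorem~A, where $S=D[[y;\tau,\theta]]$ and $y=\varphi(ax-t)$ with $a\in\O^\times$, $t\in J(\O)$. Since $\varphi$ extends $\iota|_\O$ we have $\varphi(\O)=M_n(D)$, and since $\varphi(x)=\iota(a)^{-1}(yI_n+\iota(t))$ lies in $M_n(D[y;\tau,\theta])$, a direct check shows that $\varphi$ carries $\O[x;\sigma,\delta]$ isomorphically onto the subring $M_n(D[y;\tau,\theta])$ generated by $M_n(D)$ and the scalar matrix $yI_n$. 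Thus I must show that $M_n(S)$ has no nonzero $M_n(D[y;\tau,\theta])$-disjoint ideals. Because every ideal of $M_n(S)$ has the form $M_n(J)$ for a unique $J\lhd S$, and $M_n(J)\cap M_n(D[y;\tau,\theta])=M_n\big(J\cap D[y;\tau,\theta]\big)$, this is equivalent to the single-variable claim: every nonzero $J\lhd S$ satisfies $J\cap D[y;\tau,\theta]\neq 0$.

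For this I would work modulo the uniformiser $\pi$ of $D$. Compatibility of $(\tau,\theta)$ gives $\tau(\pi D)=\pi D$ and $\theta(D)\subseteq \pi D=J(D)$, so $\pi$ is a normal element of $S$ and $S/\pi S\cong \overline D[[y;\overline\tau]]$, a skew power series ring over the residue division ring $\overline D=D/\pi D$ with \emph{vanishing} derivation. Two observations then complete the set-up. First, in $\overline D[[y;\overline\tau]]$ the element $y$ is normal and the ring is local with maximal ideal $(y)$, so its only nonzero ideals are the $(y^m)$. Second, since $S$ is separated ($\bigcap_k\pi^kS=0$), replacing $J$ by $\{s\in S: \pi s\in J\}$ finitely many times and factoring out the corresponding power of $\pi$, I may assume that the image $\overline J\subseteq S/\pi S$ is nonzero; a nonzero polynomial found in this new ideal yields one in the original $J$ after multiplying by that power of $\pi$. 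So I may assume $\overline J=(y^m)$ and lift the generator to some $j\in J$ with $j\equiv y^m\pmod{\pi S}$.

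The heart of the matter is to replace this power series $j$ by a \emph{polynomial} inside $J$, and this is precisely what a Weierstrass preparation theorem for $S=D[[y;\tau,\theta]]$ supplies. The congruence $j\equiv y^m$ means that $j$ is $y$-distinguished of degree $m$, so preparation writes $j=u\,g$ with $u\in S^\times$ and $g=y^m+c_{m-1}y^{m-1}+\dots+c_0$ a distinguished polynomial ($c_i\in\pi D$). Then $g=u^{-1}j\in J$ because $J$ is a left ideal, and $0\neq g\in D[y;\tau,\theta]$, as required. Proving (or citing) this skew, filtered Weierstrass preparation in the complete setting is the main obstacle; granting it, the first assertion of the theorem follows, while the passages through $\varphi$, through the matrix ideal correspondence, and through reduction mod $\pi$ are all formal.

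For the final statement I would localise. The ring $T:=Q\otimes_\O\O[[x;\sigma,\delta]]$ is obtained from $\O[[x;\sigma,\delta]]$ by inverting the normal element $\pi$ (under $\varphi$ it becomes $M_n(S[\pi^{-1}])$), and it contains $Q[x;\sigma,\delta]=Q\otimes_\O\O[x;\sigma,\delta]$. Given a nonzero ideal $\mathcal I\lhd T$ and $0\neq\xi\in\mathcal I$, some $\pi^k\xi$ lies in $\O[[x;\sigma,\delta]]$ and, as $\pi$ is regular, is a nonzero element of $\mathcal I\cap\O[[x;\sigma,\delta]]$; by the first part this intersection meets $\O[x;\sigma,\delta]\subseteq Q[x;\sigma,\delta]$. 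Hence $\mathcal I\cap Q[x;\sigma,\delta]$ is a nonzero ideal of $Q[x;\sigma,\delta]$, so if $Q[x;\sigma,\delta]$ is simple then $1\in\mathcal I$ and $\mathcal I=T$, proving that $T$ is simple.
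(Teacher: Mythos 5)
Your proposal is correct and follows essentially the same route as the paper: reduce via Theorem \ref{A} and the matrix--ideal correspondence to producing polynomial elements in nonzero ideals of $D[[y;\tau,\theta]]$, invoke a noncommutative Weierstrass preparation theorem there, and prove the second statement by exactly the paper's $\pi$-localisation argument via Corollary \ref{cor: localising SPS rings at pi}. The preparation theorem you flag as the ``main obstacle'' is precisely what the paper cites (a right-hand version of \cite[Theorem 3.1, Corollary 3.2]{venjakob}), and in Theorem \ref{thm: right ideals of the SPS ring over D contain polynomials} it is applied directly to an arbitrary element of a right ideal after factoring $\pi^m$ out coefficientwise, which makes your reduction mod $\pi$, the classification of ideals of $\overline{D}[[y;\overline{\tau}]]$, and the iterated division of $J$ by $\pi$ unnecessary, while also yielding the stronger right-ideal version that the paper reuses in the proof of Theorem \ref{D}.
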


\endgroup

\subsection{Uniform dimension}

Recall that the \emph{uniform dimension} (also called \emph{Goldie dimension} or \emph{Goldie rank}) of a right $R$-module $M$ is defined as follows. We set $\udim(M_R) = n$ if and only if there are uniform submodules $U_1, \dots, U_n \leq M$, pairwise intersecting in zero, such that $U_1 \oplus \dots \oplus U_n \leq M$ is an essential submodule \cite[2.2.9]{MR}. If $R$ is a ring, we write $\rudim(R) := \udim(R_R)$ for its (right) uniform dimension.

Uniform dimension is preserved under skew polynomial extensions in many cases of interest. For instance, Goodearl and Letzter showed that $\rudim(R[x;\sigma,\delta]) = \rudim(R)$ if $R$ is a prime noetherian ring \cite[Lemma 1.2]{GooLet94}, and Matczuk \cite{Mat95} showed that this equality holds in an even broader range of cases, including the case where $R$ is semiprime right Goldie.

More recently, the study of uniform dimension under skew power series extensions was initiated by Letzter and Wang in the paper \cite{letzter-wang-goldie}. Let $S = R[[x;\sigma]]$ be a \emph{pure automorphic} skew power series extension (i.e. $\delta = 0$): then, if $R$ is semiprime right noetherian, we have that $\rudim(S) = \rudim(R)$ by \cite[Theorem 2.8]{letzter-wang-goldie}. 

Of course, if $(R,v)$ is a complete positively filtered ring, $(\sigma,\delta)$ is a compatible skew derivation on $R$, and $\delta$ happens to be an \emph{inner} $\sigma$-derivation of the form described in Theorem A(ii), say $\delta = \d_{\sigma,t}$ for some $t\in R$ satisfying $v(t)\geq 1$, then it follows from Theorem A that $R[[x;\sigma,\delta]] = R[[y; \sigma]]$ after setting $y = x+t$. This puts us immediately into the context of \cite{letzter-wang-goldie}, allowing us to conclude that $\rudim(R[[x; \sigma,\delta]]) = \rudim(R)$ in this context too.

Now assume Hypotheses (H1--3) + (S) and their notation: in particular, recall that $\O \cong M_n(D)$.

We prove the following result only under these rather stringent restrictions, but this is (to our knowledge) the first such result for skew power series extensions with nontrivial derivations, and unlike the previous paragraph, it covers the case of some \emph{outer} $\sigma$-derivations using methods unlike those of \cite{letzter-wang-goldie}. We hope that, combined with the localisation process for filtered rings outlined in \cite[\S 3 and Theorem C]{ardakovInv}, this will spark further research for more general filtered skew power series rings. In \S \ref{subsec: udim}, we prove:

\begingroup
\setcounter{tmp}{\value{thm}}
\setcounter{thm}{3} 
\renewcommand\thethm{\Alph{thm}}

\begin{thm}\label{D}
If we assume that the data $(Q,\O,v_Q)$ satisfies Hypotheses (H1-3) + (S), then $\rudim(\O[[x;\sigma,\delta]]) = \rudim(\O)$.
\end{thm}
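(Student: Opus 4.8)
The plan is to transport the entire question across the isomorphism of Theorem~\ref{A} and then to exploit the fact that the base ring $D$, unlike $\O$ itself, is a \emph{domain}. Write $S := D[[y;\tau,\theta]]$, so that Theorem~\ref{A} provides a ring isomorphism $\O[[x;\sigma,\delta]] \cong M_n(S)$; as uniform dimension is an isomorphism invariant, it suffices to compute $\rudim(M_n(S))$. For any ring $S$, the right regular module decomposes as $M_n(S)_{M_n(S)} = \bigoplus_{i=1}^n e_{ii}M_n(S)$ into $n$ pairwise isomorphic summands, each corresponding under the Morita equivalence between $S$ and $M_n(S)$ to the module $S_S$; since uniform dimension is additive over finite direct sums and invariant under Morita equivalence, this gives $\rudim(M_n(S)) = n\cdot\rudim(S)$. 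Applying the same identity to $\O = M_n(D)$ yields $\rudim(\O) = n\cdot\rudim(D)$, so the theorem reduces to the single equality $\rudim(S) = \rudim(D)$. As $D$ is a noetherian domain it is right Ore, hence uniform, so $\rudim(D) = 1$; it therefore remains only to prove that $\rudim(S) = 1$.

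To show $\rudim(S) = 1$ I would prove that $S$ is a right noetherian domain, since any right noetherian domain is right Ore and so has uniform dimension $1$. Both properties I would read off the associated graded ring of $S$ taken with respect to its standard filtration $f_{v,y}$. Because $D$ is a complete discrete valuation ring, $\gr D$ is a skew polynomial ring over the residue division ring of $D$, which is a noetherian domain; and $\gr S$ is obtained from $\gr D$ by adjoining the principal symbol of $y$ as a further (skew-)polynomial variable, so $\gr S$ is again a noetherian domain by an iterated application of the skew Hilbert basis theorem. Since the principal symbol map is multiplicative whenever the product of the symbols is nonzero, $\gr S$ being a domain forces $S$ to be a domain; and since $S$ is complete with respect to a filtration whose associated graded ring is noetherian, $S$ is right noetherian by the standard filtered-lifting argument. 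Hence $S$ is a right noetherian domain, $\rudim(S) = 1$, and the theorem follows.

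The main obstacle I anticipate lies in the identification of $\gr S$: one must amalgamate the $J(D)$-adic filtration on $D$ with the $y$-grading coming from $f_{v,y}$ into a single filtration on $S$, and then use the compatibility of $(\tau,\theta)$ with $v_F$ to verify that the principal symbol of $y$ behaves as a genuine polynomial variable over $\gr D$ (so that $\gr S$ is noetherian and the completeness of $S$ can be leveraged to lift noetherianity). Once this structural computation is carried out, the Morita scaling of uniform dimension and the passage from ``noetherian domain'' to ``uniform dimension $1$'' are routine.
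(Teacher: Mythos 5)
Your proposal is correct, but its second half takes a genuinely different route from the paper. The reduction steps agree exactly: both you and the paper combine Theorem~\ref{A} with $\rudim(M_n(S)) = n\,\rudim(S)$ for $S = D[[y;\tau,\theta]]$ and with $\rudim(D) = 1$, so everything hinges on $\rudim(S) = 1$. For that last step the paper does \emph{not} pass to the associated graded ring: it invokes its Weierstrass preparation theorem (Theorem~\ref{thm: right ideals of the SPS ring over D contain polynomials}), by which every nonzero right ideal of $S$ meets the polynomial subring $D[y;\tau,\theta]$ nontrivially; since that subring is a noetherian (hence Ore) domain, a uniform right ideal $U$ and an arbitrary nonzero right ideal $I$ of $S$ satisfy $U \cap I \neq 0$, so $U$ is essential and $\rudim(S) = 1$. (The paper phrases this last intersection step via primeness of $D[y;\tau,\theta]$, but what is really needed is the Ore/domain property of that subring, since two nonzero right ideals of a prime ring can intersect trivially; your formulation sidesteps this entirely.) You instead prove the stronger structural fact that $S$ itself is a right noetherian domain: normality of $\pi$ (Proposition~\ref{propn: ideals in DVRs}) gives $\gr D \cong k[\bar{\pi};\bar{\gamma}]$ over the residue division ring $k$, and compatibility of $(\tau,\theta)$ with $v_F$ --- which is exactly what makes $\theta$ vanish at the graded level --- gives $\gr S \cong (\gr D)[\bar{y};\gr\tau]$, an iterated skew polynomial ring over a division ring; completeness and separatedness of $f_{v,y}$ then let you lift ``noetherian'' and ``domain'' from $\gr S$ to $S$ by the standard arguments for complete positively filtered rings, and a right noetherian domain is right Ore, hence uniform. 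The identification of $\gr S$, which you rightly flag as the point needing care, is precisely the computation underlying the noetherianity results for compatible skew power series rings in the literature the paper leans on (Venjakob, Letzter), so it is sound. What each approach buys: the paper's argument is shorter in context because Theorem~\ref{thm: right ideals of the SPS ring over D contain polynomials} is already in hand (it is also needed for Theorem~\ref{C}); yours is independent of Weierstrass preparation and delivers noetherianity and primeness of $S$ --- hence of $\O[[x;\sigma,\delta]] \cong M_n(S)$ --- as byproducts.
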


\endgroup

\section{Preliminaries}

\subsection{Filtered rings and discrete valuation rings}

Our conventions for filtrations (which, in this paper, are always separated $\mathbb{Z}$-filtrations) are as follows.

A \emph{(ring) filtration} on a ring $R$ is a function $f: R\to \mathbb{Z}\cup\{\infty\}$ satisfying the following properties for all $r,s\in R$:
\begin{enumerate}[label=(\roman*),noitemsep]
\item $f(1) = 0$,
\item $f(r+s) \geq \min\{f(r),f(s)\}$,
\item $f(rs) \geq f(r) + f(s)$,
\item $f(r) = \infty$ if and only if $r = 0$.
\end{enumerate}
We will say that $(R,f)$ is a \emph{filtered ring} for short. If $f$ takes values in $\mathbb{N}\cup\{\infty\}$, we will say that $(R,f)$ is \emph{$\mathbb{N}$-filtered} or \emph{positively filtered}.

\begin{defn}\label{defn: filtrations}
$ $

\begin{enumerate}
\item If $(A, f_A)$ and $(B, f_B)$ are filtered rings, the \emph{product filtration} $f := f_A\times f_B$ on $R = A\times B$ is given by $f(a,b) = \min\{f_A(a), f_B(b)\}$.
\item If $(A, f)$ is a filtered ring and $n\geq 2$ is an integer, the \emph{matrix filtration} $g := M_n(f)$ on $M_n(A)$ is given by $g(\sum a_{ij} e_{ij}) = \min_{i,j} \{f(a_{ij})\}$, where $\{e_{ij}\}_{1\leq i,j\leq n}$ is the standard set of matrix units of $M_n(A)$.
\end{enumerate}
\end{defn}

\subsection{Skew derivations on semisimple artinian rings}

Let $R$ be a ring. The pair $(\sigma,\delta)$ is called a \emph{skew derivation} on $R$ if $\sigma\in\Aut(R)$ and $\delta$ is a \emph{(left) $\sigma$-derivation} of $R$, which means that $\delta$ is a linear map satisfying $\delta(rs) = \delta(r)s + \sigma(r)\delta(s)$ for all $r,s\in R$.

Here are some basic properties:

Suppose that $Q$ is a semisimple artinian ring (\emph{without} topology), say $Q = \prod_{i=1}^d A_i$ as a product of two-sided ideals, where each $A_i \cong M_{n_i}(F_i)$ as rings for some positive integers $n_i$ and division rings $F_i$. Suppose that $(\sigma, \delta)$ is a skew derivation on $Q$. We list some well-known facts.

\begin{props}\label{props: cauchon-robson facts about sigma and delta}
$ $

\begin{enumerate}[label=\arabic*.]
\item There exists a permutation $\rho$ of the indices $\{1, \dots, d\}$ such that $\sigma(A_i) = A_{\rho(i)}$ and $\delta(A_i) \subseteq A_i + A_{\rho(i)}$. Hence, if $\mathcal{S}$ is an orbit of $\rho$, then setting $B := \prod_{i\in \mathcal{S}} A_i$ and $\sigma' = \sigma|_{B}$, $\delta' = \delta|_{B}$, we get that $(\sigma', \delta')$ is a skew derivation of $B$. \cite[1.1--1.3]{cauchon-robson}
\item Suppose that $\rho$ permutes $\{1, \dots, d\}$ transitively. Then $n_1 = \dots = n_d$ ($= n$, say), so that there exists an isomorphism $\iota: Q \to M_n(F_1\times \dots \times F_d)$. Writing $F := F_1 \times \dots \times F_d$, we can then write $\sigma$ as $\eta\circ M_n(\tau)^\iota$, where $\eta$ is an inner automorphism of $Q$ and $\tau$ is an automorphism of $F$. \cite[2.1--2.4]{cauchon-robson}  (Here, and elsewhere, $M_n(\tau)^\iota$ means $\iota^{-1} M_n(\tau) \iota$.)
\item Suppose further that $\eta$ is trivial, so $\sigma = M_n(\tau)^\iota$. Then $\delta = \varepsilon + M_n(\theta)^\iota$, where $\varepsilon$ is an inner $\sigma$-derivation of $Q$ and $\theta$ is a $\tau$-derivation of $F$. \cite[2.5]{cauchon-robson}
\end{enumerate}
\end{props}

\begin{rk}
In fact, in the context of Property \ref{props: cauchon-robson facts about sigma and delta}.3, if $d > 1$ then something stronger holds: $\theta$ can be taken to be the zero map, so that $\delta$ itself is an inner $\sigma$-derivation \cite[1.4]{cauchon-robson}. However, in the context of \emph{filtered} rings, we will allow $\theta$ to be nonzero, as this extra flexibility is crucial for ensuring that the decomposition $\delta = \varepsilon + M_n(\theta)^\iota$ behaves well with respect to the filtration.
\end{rk}

\subsection{Compatible filtrations and skew power series rings}\label{subsec: defining skew power series rings}

\begin{defn}\label{defn: compatible}
Let $(R,v)$ be a filtered ring and $(\sigma,\delta)$ a skew derivation on $R$. We will say that $(\sigma,\delta)$ is \emph{(weakly) compatible} with $v$ if $v(\sigma(r)) = v(r)$ and $v(\delta(r)) > v(r)$ for all $0\neq r\in R$.
\end{defn}

\begin{rk}
This is more general than the notion of ``compatibility" used by the authors in \cite{jones-woods-1}, which could be called \emph{strong} compatibility.
\end{rk}

\begin{defn}
Let $(R,v)$ be a complete, positively filtered ring, i.e. $R$ is a ring admitting a separated discrete filtration $v: R\to \mathbb{N}\cup\{\infty\}$ with respect to which $R$ is complete.

The set $R[[x]] := \displaystyle \prod_{n\geq 0} Rx^n$, whose elements are formal sums $r_0 + r_1x + r_2 x^2 + \dots$ over arbitrary $r_i\in R$, is a left $R$-module. This is a complete filtered $R$-module with standard filtration 
$$f := f_{v,x}: R[[x]] \to \mathbb{N}\cup\{\infty\},$$ given by
\begin{equation}\label{eqn: x-filtration on SPS ring}
f\left(\sum_{i\geq 0} r_i x^i\right) = \inf_{i\geq 0} \{ v(r_i) + i\},
\end{equation}
which is separated and discrete. Note that $R[x]$ is dense in $R[[x]]$.

A skew derivation $(\sigma,\delta)$ on $R$ makes $R[x]$ into a ring, with multiplication determined uniquely by the rule $xr = \sigma(r)x + \delta(r)$. We write this ring as $R[x; \sigma, \delta]$.

If $(\sigma,\delta)$ is compatible with $v$, then it induces a well-defined associative multiplication on $R[[x]]$ in the same way: see \cite[Proposition 1.17]{jones-woods-1} (cf. \cite[Lemma 2.1]{venjakob} or \cite[\S 3.4]{letzter-noeth-skew}), and we denote this ring by $R[[x;\sigma,\delta]]$. The function $f$ on $R[[x;\sigma,\delta]]$ as defined above is a positive ring filtration, and $R[[x;\sigma,\delta]]$ is complete with respect to $f$.
\end{defn}

\subsection{Discrete valuation rings}\label{subsec: DVRs}

\begin{defn}\label{defn: DVRs}
Following Ardakov \cite{ardakovInv}, we will say that a \emph{discrete valuation ring} is a noetherian domain $D$ with the property that, for every nonzero $x\in Q(D)$ (the division ring of quotients), we have either $x\in D$ or $x^{-1}\in D$.
\end{defn}

We begin by showing that $D$ has properties very similar to those of commutative discrete valuation rings.

\begin{lem}\label{lem: right ideals in DVRs}
Let $D$ be a discrete valuation ring.
\begin{enumerate}[label=(\roman*),noitemsep]
\item $D$ is a local ring.
\item All right (resp. left) ideals of $D$ are principal.
\item The lattice of right (resp. left) ideals of $D$ is totally ordered.
\item All right (resp. left) ideals of $D$ are two-sided.
\end{enumerate}
\end{lem}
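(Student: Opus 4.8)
The plan is to take the totally ordered property (iii) as the structural backbone and derive everything else from it, leaving the two-sidedness (iv) --- which genuinely needs noetherianity --- for last.

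First I would prove (iii) for right ideals directly from the valuation axiom. Given nonzero right ideals $I, J$, pick $0 \neq a \in I$ and $0 \neq b \in J$ and apply the defining property to $a^{-1}b \in Q(D)$: if $a^{-1}b \in D$ then $b = a(a^{-1}b) \in aD \subseteq I$, while if $b^{-1}a \in D$ then $a \in bD \subseteq J$. Hence for every such pair, either $b \in I$ or $a \in J$, so if neither $I \subseteq J$ nor $J \subseteq I$ we could choose $a \in I \setminus J$ and $b \in J \setminus I$ and reach a contradiction. The argument is left/right symmetric, giving (iii) on both sides.

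Given (iii), parts (ii) and (i) are short. For (ii): since $D$ is noetherian a right ideal is finitely generated, and among finitely many principal right ideals $a_i D$ one is largest by (iii), so it equals their sum. For (i): the proper right ideals form a chain, so their union is again a right ideal, and it is proper (were it all of $D$ it would contain $1$, hence some single proper ideal in the chain would contain $1$); this unique maximal right ideal makes $D$ local with $J(D) = M = \pi D$ for some $\pi$.

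The real work is (iv), and this is where I expect the main obstacle. The target is to show every nonzero right ideal is a power of the two-sided ideal $M = J(D)$, whence two-sidedness is immediate. I would first check $M^n = \pi^n D$ by induction: since $M$ is two-sided, $D\pi \subseteq DM = M = \pi D$, so $D\pi^n \subseteq \pi^n D$, which collapses $M^{n+1} = \pi D \pi^n D$ down to $\pi^{n+1}D$. The crux is then $\bigcap_n M^n = 0$, a Krull-intersection statement with no free lunch in the noncommutative setting. I would prove it by a Nakayama argument tailored to this ring: writing $N = \bigcap_n \pi^n D$, for $b \in N$ one has $\pi^{-1}b \in \pi^{n-1}D$ for every $n \geq 1$, so $\pi^{-1}b \in N$; hence $N \subseteq \pi N \subseteq MN \subseteq N$, forcing $N = MN$ with $N$ finitely generated and $M \subseteq J(D)$, so $N = 0$. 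With $\bigcap_n M^n = 0$ in hand, any nonzero $a$ lies in a largest $M^k = \pi^k D$, and writing $a = \pi^k u$ with $u \notin M$, locality makes $u$ a unit, so $aD = \pi^k D = M^k$ is two-sided. The symmetric computation handles left ideals, completing (iv).
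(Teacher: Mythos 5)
Your proof is correct, but it takes a genuinely different route from the paper's. The paper disposes of (i)--(iii) by citing Marubayashi--Van Oystaeyen (a discrete valuation ring is a noetherian total subring of the skew field $Q(D)$), and proves (iv) by a short argument adapted from Brungs: take $J = aD$ maximal among right ideals that are \emph{not} two-sided (this is where noetherianity enters), produce $b = ra \notin aD$, observe that $bD \supsetneq aD$ forces $bD$ to be two-sided by maximality, and derive a contradiction from the resulting equations $b = rbs$, $rb = bt$ using that $D$ is a domain and Dedekind-finite. You instead build everything from scratch: your direct proof of (iii) from the dichotomy $a^{-1}b \in D$ or $b^{-1}a \in D$ is valid (note it implicitly uses that $Q(D)$ exists, i.e.\ that a noetherian domain is Ore, but the paper's Definition \ref{defn: DVRs} already presupposes $Q(D)$), your deductions of (ii) and (i) are standard, and your (iv) goes by classifying \emph{all} nonzero right ideals as $\pi^k D = J(D)^k$. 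The delicate step is the Krull intersection $N := \bigcap_n \pi^n D = 0$, and your argument for it is sound: left-cancelling $\pi$ (legitimate in a domain) gives $N = \pi N = J(D)N$, and $N$ is two-sided as an intersection of the two-sided ideals $J(D)^n$, hence finitely generated as a left module, so left Nakayama applies. What each approach buys: the paper's proof of (iv) is shorter and needs no intersection theorem, whereas your route is self-contained (no external citations) and proves strictly more --- it establishes the paper's Proposition \ref{propn: ideals in DVRs} en route, and in particular replaces the paper's appeal there to FBN rings and \cite[Theorem 9.13]{GW} for $\bigcap_n \pi^n D = 0$ (an appeal which requires (iv) as input) by an elementary Nakayama argument that is independent of (iv). Two cosmetic points: the degenerate case where $D$ is a division ring (permitted by the paper's definition, with $\pi = 0$) should be flagged as trivial, since your expression $\pi^{-1}b$ presumes $\pi \neq 0$; and since your Nakayama step is left-handed, it uses left noetherianity, which is fine under the paper's (two-sided) convention.
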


\begin{proof}
In the language of \cite{marubayashi-van-oystaeyen}, $D$ is a noetherian \emph{total subring} of the skew field $Q(D)$, and statements (i--iii) follow from \cite[Proposition 1.2.15]{marubayashi-van-oystaeyen}.

The proof of (iv) below is adapted from \cite[Lemma 1]{Bru69}. We give the proof for right ideals; the proof for left ideals is of course similar.

Suppose there exist right ideals of $D$ that are not two-sided, and let $J$ be the maximal such right ideal. By (ii), $J = aD$ for some $a\in D$: then, for some $r\in D$, we have $ra =: b \not\in aD$ by assumption. By (iii), this implies $aD \subsetneq bD$, and so $a = bs$ for some $s\in D$. Combining these two equations, we can see that $b = rbs$.

Now, by the maximality of $J$, we have that $Db \subseteq DbD = bD$, so that $rb = bt$ for some $t\in D$. In particular, $b = bts$, and so $b(1-ts) = 0$. But $b$ cannot be zero, so as $D$ is a domain, we must have $ts = 1$, and hence (as noetherian rings are Dedekind-finite) $st = 1$. It follows that $at = b$, contradicting the assumption that $b\not\in aD$.
\end{proof}

\begin{propn} \label{propn: ideals in DVRs}
Let $D$ be a discrete valuation ring.
\begin{enumerate}[label=(\roman*),noitemsep]
\item $J(D) = \pi D$ for some normal element $\pi$.
\item Every nonzero ideal of $D$ has the form $\pi^n D$ for some $n\in\mathbb{N}$.
\end{enumerate}
\end{propn}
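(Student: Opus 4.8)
The plan is to derive both statements from \autoref{lem: right ideals in DVRs} together with the fact that $D$ is local. For part (i): since $D$ is local, $J(D)$ is its unique maximal two-sided ideal, and in particular it is a right ideal, so by \autoref{lem: right ideals in DVRs}(ii) we may write $J(D)=\pi D$ for some $\pi\in D$. It then remains to verify that $\pi$ is normal, i.e.\ $\pi D=D\pi$, and for this I would invoke \autoref{lem: right ideals in DVRs}(iv): the right ideal $\pi D$ is automatically two-sided, so $d\pi\in d\pi D\subseteq\pi D$ for every $d\in D$, giving $D\pi\subseteq\pi D$; by the symmetric statement for left ideals, $D\pi$ is two-sided, so $\pi D\subseteq D\pi D=D\pi$. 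These two inclusions give $\pi D=D\pi$.

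For part (ii), the crucial preliminary — and the step I expect to be the main obstacle — is the ``Krull intersection'' statement $\bigcap_{n\geq 0}\pi^n D=0$. Writing $K$ for this intersection (a two-sided ideal, hence principal as a left ideal and so finitely generated as a left $D$-module by \autoref{lem: right ideals in DVRs}(ii)), I would show $K=\pi K$ by computing inside the division ring $Q(D)$: given $x\in K$, for each $n\geq 1$ we have $x\in\pi^n D=\pi(\pi^{n-1}D)$, so the element $\pi^{-1}x\in Q(D)$ lies in $\pi^{n-1}D$; letting $n$ range shows $\pi^{-1}x\in K$, whence $x=\pi(\pi^{-1}x)\in\pi K$. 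Since $\pi\in J(D)$ and $K$ is an ideal, $K=\pi K\subseteq J(D)K\subseteq K$, so $J(D)K=K$, and Nakayama's lemma applied to the finitely generated left module $K$ forces $K=0$.

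Granting this, I would conclude as follows. Let $I$ be a nonzero ideal; by \autoref{lem: right ideals in DVRs}(ii) write $I=aD$ with $a\neq 0$, and note that each $\pi^n D$ is a genuine (two-sided) ideal by \autoref{lem: right ideals in DVRs}(iv). The ideals $\pi^n D$ form a descending chain with zero intersection, so the set $\{m : a\in\pi^m D\}$ is non-empty (it contains $0$) and bounded above, and hence has a maximum $n$. Writing $a=\pi^n u$ with $u\in D$, I claim $u$ is a unit: otherwise $u$ lies in the unique maximal ideal $J(D)=\pi D$ (using that $D$ is local), giving $a\in\pi^{n+1}D$ and contradicting the maximality of $n$. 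Therefore $aD=\pi^n uD=\pi^n D$, as required.
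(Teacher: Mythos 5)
Your proof is correct, and while part (i) matches the paper's (which compresses the same reasoning to ``immediate consequence of Lemma~\ref{lem: right ideals in DVRs}''), your part (ii) takes a genuinely different route at the crucial step. Both proofs hinge on the Krull-type intersection property $\bigcap_{n\geq 0}\pi^n D=0$. The paper obtains it by citing two nontrivial external results: Lemma~\ref{lem: right ideals in DVRs}(iv) makes $D$ an FBN ring \cite[6.4.7]{MR}, and Jacobson's conjecture holds for FBN rings \cite[Theorem 9.13]{GW}. You instead prove it from scratch: setting $K=\bigcap_n\pi^n D$, you show $K=\pi K$ by the clean trick of computing $\pi^{-1}x$ inside $Q(D)$, and then kill $K$ with Nakayama, using the left-ideal version of Lemma~\ref{lem: right ideals in DVRs}(ii) to see $K$ is finitely generated (noetherianity of $D$ would also suffice). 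This is more elementary and self-contained; what the paper's citation route buys is brevity. One small caveat in your version: the manipulation with $\pi^{-1}$ tacitly assumes $\pi\neq 0$; if $\pi=0$ then $D$ is local with $J(D)=0$, hence a division ring, and both the intersection claim and part (ii) are trivial --- worth a one-line remark. The endgame also differs mildly: the paper uses the total ordering of ideals (Lemma~\ref{lem: right ideals in DVRs}(iii)) to sandwich $\pi^{n+1}D\subsetneq aD\subseteq\pi^n D$, whereas you extract the maximal $n$ with $a\in\pi^n D$ directly from the vanishing intersection, so your argument never invokes (iii); both proofs then conclude identically, using locality (Lemma~\ref{lem: right ideals in DVRs}(i)) to promote $u\in D\setminus\pi D$ to a unit.
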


\begin{proof}
(i) is an immediate consequence of Lemma \ref{lem: right ideals in DVRs}.

To show (ii): note that Lemma \ref{lem: right ideals in DVRs}(iv) also implies that $D$ is an FBN ring \cite[6.4.7]{MR}, and so $\bigcap_{n=1}^\infty \pi^n D = 0$ by \cite[Theorem 9.13]{GW}. We now argue exactly as in the commutative case: indeed, a nonzero ideal $aD$ must satisfy $\pi^{n+1} D\subsetneq aD\subseteq \pi^{n}D$ for some $n$ by Lemma \ref{lem: right ideals in DVRs}(iii), from which it follows that $a = \pi^n u$ for some $u\in D\setminus \pi D$, which must be a unit by Lemma \ref{lem: right ideals in DVRs}(i).
\end{proof}

An element $\pi$ as in the above proposition will be called a \emph{uniformiser} of $D$.

In the following, $D$ will continue to denote a complete discrete valuation ring, and we will also set $F = Q(D)$, $\O = M_n(D)$ and $Q = M_n(F)$. Also write $v_F$ for the induced $J(D)$-adic filtration on $F$, and suppose that $(\tau, \theta)$ is a skew derivation on $F$ compatible with $v_F$; likewise write $v_Q$ for the $J(\O)$-adic filtration on $Q$, and suppose that $(\sigma, \delta)$ is a skew derivation on $Q$ compatible with $v_Q$. This puts us essentially in the situation of Hypotheses (H1--3) + (S). The following is now routine to check.

\begin{cor}\label{cor: localising SPS rings at pi}
$ $

\begin{enumerate}[label=(\roman*), noitemsep]
\item Let $\mathcal{S}$ be the multiplicatively closed set in $D$ generated by $\pi$. Then $F = \mathcal{S}^{-1}D = D\mathcal{S}^{-1}$. Moreover, $\pi$ is normal in $D[[y;\tau,\theta]]$, and $F\otimes_D D[[y;\tau,\theta]] = \mathcal{S}^{-1}D[[y;\tau,\theta]] = D[[y;\tau,\theta]]\mathcal{S}^{-1}$.
\item Let $\mathcal{S}$ be the multiplicatively closed set in $\O$ generated by $\pi$ (where we identify $D$ with its diagonal embedding in $\O$). Then $Q = \mathcal{S}^{-1}\O = \O\mathcal{S}^{-1}$. Moreover, $\pi$ is normal in $\O[[x;\sigma,\delta]]$, and $Q\otimes_\O \O[[x;\sigma,\delta]] = \mathcal{S}^{-1}\O[[x;\sigma,\delta]] = \O[[x;\sigma,\delta]]\mathcal{S}^{-1}$.\qed
\end{enumerate}
\end{cor}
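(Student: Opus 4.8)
The plan is to reduce every assertion to a single structural fact — that $\pi$ is a \emph{normal, regular} element of the relevant skew power series ring — and then to invoke the standard theory of Ore localisation at a normal element. I would begin with the easy claims $F = \mathcal{S}^{-1}D = D\mathcal{S}^{-1}$. By Proposition \ref{propn: ideals in DVRs} we have $J(D) = \pi D = D\pi$ with $\pi$ normal, and every nonzero $d\in D$ has the form $d = \pi^n u$ with $u\in D^\times$; hence inverting $\mathcal{S} = \{\pi^n : n\geq 0\}$ inverts every nonzero element of $D$. Since $\pi$ is normal, $\mathcal{S}$ is a two-sided Ore set, so the left and right localisations agree and both equal the quotient division ring $F = Q(D)$.

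The heart of the matter is to show that $\pi$ is normal in $S := D[[y;\tau,\theta]]$. Here I would first observe that compatibility of $(\tau,\theta)$ with $v_F$ makes the ideal $P := J(D) = \{d : v_F(d)\geq 1\}$ stable: $\tau(P) = P$ because $\tau$ preserves $v_F$, and $\theta(P)\subseteq P$ because $v_F(\theta(d)) > v_F(d)$; in particular $\tau(\pi),\theta(\pi)\in P$. I would then identify both $\pi S$ and $S\pi$ with the closed set $P[[y]] := \{\sum_n p_n y^n : p_n\in P\}$. The equality $\pi S = P[[y]]$ is immediate, since left multiplication by the constant $\pi$ acts coefficientwise and $\pi D = P$. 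For $S\pi$, an induction on $n$ using $y^n\pi = y(y^{n-1}\pi)$ together with the relation $yr = \tau(r)y + \theta(r)$ and the stability of $P$ shows $y^n\pi\in P[[y]]$, whence $S\pi\subseteq P[[y]] = \pi S$. The reverse inclusion $\pi S\subseteq S\pi$ follows by solving the commutation relation the other way: setting $r = \tau^{-1}(\pi)$ gives $\pi y = (yb - c)\pi$ where $b\pi = \tau^{-1}(\pi)$ and $c\pi = \theta\tau^{-1}(\pi)$, an induction then yields $\pi y^n\in S\pi$, and a general $\pi w$ is rewritten as a convergent sum of such terms. This last step is where I expect the main difficulty: unlike the polynomial case, one must check that the relevant series converge and that $S\pi$ is closed in the standard filtration, which uses completeness of $S$ together with the fact that $f(s\pi) = f(s) + 1$.

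With normality in hand I would note that $\gr S$ (with respect to $f$) is an iterated skew polynomial ring over the division ring $D/J(D)$, hence a domain, so $S$ is a domain and $\pi$ is regular. A normal regular element generates a two-sided Ore set, so $\mathcal{S}^{-1}S = S\mathcal{S}^{-1}$ exists, and the standard localisation-is-flat argument identifies it with $F\otimes_D S$ via $\pi^{-n}\otimes s\mapsto \pi^{-n}s$. This gives part (i).

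Finally, part (ii) runs in exact parallel after passing through matrices. Writing $\O = M_n(D)$, the scalar matrix $\pi$ is normal in $\O$ with $J(\O) = M_n(J(D)) = \pi\O = \O\pi$, so inverting $\mathcal{S}$ gives $Q = M_n(F) = \mathcal{S}^{-1}\O = \O\mathcal{S}^{-1}$. The same computation as above — now using that $(\sigma,\delta)$ is compatible with the $J(\O)$-adic filtration $v_Q$, so that $J(\O)$ is $(\sigma,\delta)$-stable and $\sigma(\pi),\delta(\pi)\in J(\O)$ — shows that $\pi$ is normal and regular in $\O[[x;\sigma,\delta]]$, yielding the two displayed localisation identities for $Q\otimes_\O\O[[x;\sigma,\delta]]$.
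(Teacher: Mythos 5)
The paper itself offers no proof of this corollary --- it is stated with its proof omitted as ``routine to check'' --- and your argument is precisely the intended verification: compatibility forces $\tau(\pi)\in J(D)$ and $\theta(\pi)\in J(D)^2$ (resp.\ $\sigma(\pi)\in J(\O)$, $\delta(\pi)\in J(\O)^2$ in part (ii)), the two commutation inductions then give $\pi S = S\pi = P[[y]]$ with $P = J(D)$, and localisation at the resulting normal regular element yields all the displayed identities, with $F=\mathcal{S}^{-1}D$ coming from the factorisation $d=\pi^n u$, $u\in D^\times$, of Proposition \ref{propn: ideals in DVRs}. Your proof is correct, and you rightly isolate the one genuinely delicate point, the convergence of $\pi w=\sum_n s_n'(\pi y^n)$: this is secured by the inductive lower bound $f(w_n)\geq n$ for $\pi y^n=w_n\pi$ (from $w_n=(yb-c)w_{n-1}$ with $f(yb-c)\geq 1$, since $v(b)=0$ and $v(c)\geq 1$) together with completeness and continuity of right multiplication by $\pi$, while the exact equality $f(s\pi)=f(s)+1$ and the regularity of $\pi$ both follow, as you note, from $\gr D[[y;\tau,\theta]]\cong(\gr D)[\bar{y};\gr\tau]$ being a domain (the $\theta$-term vanishes on symbols because $\theta$ strictly raises values).
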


\section{Reparametrising filtered skew derivations}\label{sec: reparametrising}

Throughout this section, let $(R,v)$ be a complete, positively filtered ring. Suppose also that $R$ admits a skew derivation $(\sigma,\delta)$ which is compatible with $v$, and take $y\in R[x]$ such that $R[x] = R[y]$ (an equality of left $R$-modules).

Given an element $\sum_{i=0}^m r_i y^i\in R[y]$, we may define the function
$$f_{v,y} : \sum_{i=0}^m r_i y^i \mapsto \inf_{i\geq 0} \{v(r_i) + i\}.$$
Note that $f_{v,x}$ is the standard ring filtration defined in (\ref{eqn: x-filtration on SPS ring}). In contrast, for arbitrary elements $y\in R[x]$, the function $f_{v,y}$ will not always be a ring filtration, and even when it is, it will generally not be equivalent to $f_{v,x}$.

\begin{ex}\label{ex: when reparametrisation doesn't work}
Take $y := x + 1 \in \mathbb{Z}_p[x]$, and $v$ the $p$-adic valuation on $\mathbb{Z}_p$. Then for all $n$, we have $f_{v,x}((x+1)^n) = 0$ but $f_{v,y}((x+1)^n) = n$. In particular, $\mathbb{Z}_p[x] = \mathbb{Z}_p[y]$ but $\mathbb{Z}_p[[x]] \neq \mathbb{Z}_p[[y]]$.
\end{ex}

In this section, we identify two families of elements $y\in R[x]$ for which $f_{v,x}$ and $f_{v,y}$ are equal as functions.

\subsection{Conditions for identical filtrations}

With notation as above, we first show that $f_{v,x}$ and $f_{v,y}$ are equal when $y = x - t$ for some $t\in R$ satisfying $v(t) \geq 1$.

Write $(x-t)^n = x^n + \beta_{n,1}x^{n-1} + \dots + \beta_{n,n-1} x + \beta_{n,n}$ for all $n$.

\begin{lem}\label{lem: values of structure constants, additive case}
For all $n, i$, we have $v(\beta_{n,i}) \geq i$.
\end{lem}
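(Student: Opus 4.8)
The plan is to prove the bound by induction on $n$, after first establishing a recurrence relating the coefficients $\beta_{n+1,j}$ to the $\beta_{n,i}$. It is convenient to adopt the conventions $\beta_{n,0} = 1$ for all $n$ and $\beta_{n,i} = 0$ whenever $i < 0$ or $i > n$, so that $(x-t)^n = \sum_{i=0}^n \beta_{n,i}x^{n-i}$, with all coefficients written on the left as dictated by the left $R$-module structure of $R[x]$.

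To obtain the recurrence, I would compute $(x-t)^{n+1} = (x-t)(x-t)^n$ term by term. Using the defining rule $xr = \sigma(r)x + \delta(r)$, each summand transforms as
$$(x-t)(\beta_{n,i}x^{n-i}) = \sigma(\beta_{n,i})x^{n-i+1} + (\delta(\beta_{n,i}) - t\beta_{n,i})x^{n-i}.$$
Collecting the coefficient of $x^{n+1-j}$ then gives
$$\beta_{n+1,j} = \sigma(\beta_{n,j}) + \delta(\beta_{n,j-1}) - t\beta_{n,j-1}.$$

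For the inductive step, assume $v(\beta_{n,i}) \geq i$ for all $i$ (the base case $n=0$ being $v(1) = 0$). I would estimate the valuation of each of the three terms on the right-hand side: compatibility gives $v(\sigma(\beta_{n,j})) = v(\beta_{n,j}) \geq j$; since $v$ is integer-valued, the strict inequality $v(\delta(r)) > v(r)$ sharpens to $v(\delta(r)) \geq v(r) + 1$, whence $v(\delta(\beta_{n,j-1})) \geq (j-1) + 1 = j$; and property (iii) of the filtration axioms together with $v(t) \geq 1$ gives $v(t\beta_{n,j-1}) \geq 1 + (j-1) = j$. Applying the ultrametric inequality (property (ii)) to the sum then yields $v(\beta_{n+1,j}) \geq j$, completing the induction. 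All these estimates remain valid, via the convention $v(0) = \infty$, when the relevant $\beta_{n,i}$ vanishes.

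There is no real obstacle here: the only points requiring a little care are deriving the recurrence correctly -- in particular, tracking that the coefficients sit on the left and that the skew rule introduces both a $\sigma$-shift and a $\delta$-term -- and the small observation that the strict inequality in the compatibility condition upgrades to a gain of at least $1$ precisely because the filtration takes integer values. Everything else follows directly from the filtration axioms.
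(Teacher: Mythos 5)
Your proof is correct and follows essentially the same route as the paper: derive the recurrence $\beta_{n+1,j} = \sigma(\beta_{n,j}) + \delta(\beta_{n,j-1}) - t\beta_{n,j-1}$ by expanding $(x-t)^{n+1} = (x-t)(x-t)^n$ with the skew multiplication rule, then induct on $n$ using $v(\sigma(r)) = v(r)$, $v(\delta(r)) \geq v(r)+1$ (from discreteness of the filtration), and $v(t) \geq 1$. Your convention $\beta_{n,i} = 0$ for $i < 0$ or $i > n$ merely unifies the boundary cases that the paper writes out separately; the argument is otherwise identical.
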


\begin{proof}
Firstly, note that $(x-t)\beta_{n,i} x^{n-i} = \sigma(\beta_{n,i}) x^{n+1-i} + (\delta(\beta_{n,i}) - t\beta_{n,i})x^{n-i}$, so (writing $\beta_{n,0} := 1$ for ease of notation) we may calculate $(x-t)^{n+1}$ as
\begin{align*}
(x-t)\left( \sum_{i=0}^n \beta_{n,i} x^{n-i}\right) &= \sum_{i=0}^n \sigma(\beta_{n,i}) x^{n+1-i} + \sum_{j=0}^n (\delta(\beta_{n,j}) - t(\beta_{n,j})) x^{n-j}\\
&= x^{n+1} + \sum_{i=1}^n (\sigma(\beta_{n,i}) + \delta(\beta_{n,i-1}) - t\beta_{n,i-1}) x^{n+1-i} + (\delta(\beta_{n,n}) - t\beta_{n,n}),
\end{align*}
by setting $j = i + 1$ in the second sum. That is,
$$\begin{cases}
\beta_{n+1,0} = 1,\\
\beta_{n+1,i} = \sigma(\beta_{n,i}) + \delta(\beta_{n,i-1}) - t\beta_{n,i-1}& (1\leq i\leq n),\\
\beta_{n+1,n+1} = \delta(\beta_{n,n}) - t\beta_{n,n},
\end{cases}$$
from which the claim follows by induction on $n$.
\end{proof}

\begin{lem}\label{lem: subtracting t from the variable}
Let $p(x)\in R[x]$ be a polynomial, and $t\in R$ such that $v(t) \geq 1$. Then $f_{v,x}(p(x-t)) \geq f_{v,x}(p(x))$.
\end{lem}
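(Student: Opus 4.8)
The plan is to reduce the statement to a term-by-term estimate using the structure constants $\beta_{n,i}$ introduced before \autoref{lem: values of structure constants, additive case}, together with the fact that $f_{v,x}$ is a genuine ring filtration (hence subadditive). Write $p(x) = \sum_{n} r_n x^n$, so that by definition $f_{v,x}(p(x)) = \min_n \{v(r_n) + n\}$. Substituting $x \mapsto x-t$ and using the expansion $(x-t)^n = x^n + \sum_{i=1}^n \beta_{n,i} x^{n-i}$ (setting $\beta_{n,0} := 1$ as in the preceding lemma), I obtain
$$p(x-t) = \sum_n r_n (x-t)^n = \sum_n \sum_{i=0}^n r_n \beta_{n,i}\, x^{n-i}.$$

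First I would bound the standard filtration of each individual monomial $r_n \beta_{n,i} x^{n-i}$ appearing on the right. By the filtration axioms and \autoref{lem: values of structure constants, additive case}, which gives $v(\beta_{n,i}) \geq i$, I have
$$v(r_n \beta_{n,i}) + (n-i) \geq v(r_n) + v(\beta_{n,i}) + (n-i) \geq v(r_n) + i + (n-i) = v(r_n) + n,$$
and this last quantity is at least $\min_k\{v(r_k)+k\} = f_{v,x}(p(x))$. (The case $i=0$ is consistent, since $v(\beta_{n,0}) = v(1) = 0$.) Thus every monomial term of $p(x-t)$, before any collection of like powers of $x$, has $f_{v,x}$-value at least $f_{v,x}(p(x))$.

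Finally I would invoke subadditivity: since $f_{v,x}$ satisfies axiom (ii) of a filtration, the value of a finite sum is at least the minimum of the values of its summands, so $f_{v,x}(p(x-t)) \geq f_{v,x}(p(x))$, as required. I do not expect a genuine obstacle here; the only point that needs a little care is that the expression above is a sum of monomials that must be regrouped by powers of $x$ before applying the definition of $f_{v,x}$, but the ultrametric inequality ensures that such regrouping can only raise the filtration value, so the term-by-term bound suffices.
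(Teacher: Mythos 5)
Your proof is correct and takes essentially the same approach as the paper: both expand $p(x-t)$ using the structure constants $\beta_{n,i}$, invoke Lemma \ref{lem: values of structure constants, additive case} to get $v(\beta_{n,i}) \geq i$, and conclude via the ultrametric inequality. The only difference is organizational --- the paper regroups by powers of $x$ first and applies the ultrametric inequality to the coefficients in $R$, while you bound each monomial $r_n\beta_{n,i}x^{n-i}$ and then use subadditivity of $f_{v,x}$, which you correctly note absorbs the regrouping step.
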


\begin{proof}
Write $p(x) = r_0 + r_1x + \dots + r_m x^m$. Then
\begin{align*}
p(x-t) = \sum_{n=0}^m r_n (x-t)^n &= \sum_{n=0}^m r_n\left( \sum_{i=0}^n  \beta_{n,i} x^{n-i}\right)& \text{setting } \beta_{n,0} := 1\\
&= \sum_{j= 0}^m \left(\sum_{n=j}^m r_n \beta_{n,n-j} \right)x^j & \text{where } j := n - i,
\end{align*}
and so
\begin{align*}
f_{v,x}(p(x-t)) &= f_{v,x}\left(\sum_{j=0}^m \left(\sum_{n=j}^m r_n \beta_{n,n-j} \right)x^j\right)\\
&= \inf_{0\leq j\leq m} \left\{ v\left(\sum_{n=j}^m r_n \beta_{n,n-j}\right) + j\right\}\\
&\geq \inf_{0\leq j\leq m} \inf_{j\leq n\leq m} \left\{ v(r_n) + v(\beta_{n,n-j}) + j\right\}& \text{as } v \text{ is a filtration} \\
&\geq \inf_{0\leq j\leq m} \inf_{j\leq n\leq m} \left\{ v(r_n) + n\right\} & \text{by Lemma \ref{lem: values of structure constants, additive case}}\\
&= \inf_{0\leq n\leq m} \left\{ v(r_{n}) + n\right\}\\
&= f_{v,x} \left( \sum_{n=0}^m r_n x^n\right) = f_{v,x}(p(x)).
\end{align*}
\end{proof}

\begin{propn}\label{propn: additive reparametrisation}
Let $t\in R$ such that $v(t) \geq 1$. Then $f_{v,x} = f_{v,x-t}$.
\end{propn}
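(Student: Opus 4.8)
The plan is to deduce the equality $f_{v,x} = f_{v,x-t}$ from Lemma \ref{lem: subtracting t from the variable} by proving the two inequalities $f_{v,x} \geq f_{v,x-t}$ and $f_{v,x-t} \geq f_{v,x}$ separately, exploiting the symmetry of the substitution $x \leftrightarrow x-t$. Throughout, I fix an arbitrary element $q \in R[x] = R[x-t]$ and compare the two values assigned to it, keeping careful track of which basis ($x$-powers or $(x-t)$-powers) the relevant coefficients live in.

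The inequality $f_{v,x} \geq f_{v,x-t}$ is essentially a restatement of Lemma \ref{lem: subtracting t from the variable}. Writing $q$ in the $(x-t)$-basis as $q = \sum_i b_i (x-t)^i$ and setting $p(X) := \sum_i b_i X^i$, we have $p(x-t) = q$ while, by definition of $f_{v,x-t}$, $f_{v,x}(p(x)) = \inf_i\{v(b_i) + i\} = f_{v,x-t}(q)$. Lemma \ref{lem: subtracting t from the variable} then reads $f_{v,x}(q) = f_{v,x}(p(x-t)) \geq f_{v,x}(p(x)) = f_{v,x-t}(q)$, as required.

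For the reverse inequality I would reapply Lemma \ref{lem: subtracting t from the variable} in the reparametrised ring. Set $y := x - t$, so that $R[y] = R[x]$, $x = y + t = y - (-t)$, and (by the explicit computation in \S\ref{subsec: reparametrising skew polynomials}) $R[x;\sigma,\delta] = R[y;\sigma,\delta']$ with $\delta' = \delta - \d_{\sigma,t}$. The key point to verify is that $(\sigma,\delta')$ is again compatible with $v$, so that Lemma \ref{lem: subtracting t from the variable} genuinely applies in the $y$-world: since $v(\d_{\sigma,t}(r)) = v(tr - \sigma(r)t) \geq v(t) + v(r) > v(r)$ (using $v(\sigma(r)) = v(r)$ and $v(t)\geq 1$), we obtain $v(\delta'(r)) \geq \min\{v(\delta(r)), v(\d_{\sigma,t}(r))\} > v(r)$. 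With compatibility secured, $f_{v,y} = f_{v,x-t}$ is exactly the standard filtration on $R[[y;\sigma,\delta']]$, and $-t$ satisfies $v(-t)\geq 1$. Writing $q$ in the $x$-basis as $q = \sum_i a_i x^i$ and setting $p(Y) := \sum_i a_i Y^i$, we have $p(y-(-t)) = p(y+t) = q$ and $f_{v,y}(p(y)) = \inf_i\{v(a_i)+i\} = f_{v,x}(q)$, so Lemma \ref{lem: subtracting t from the variable} (applied to $(\sigma,\delta')$ with shift $-t$) gives $f_{v,x-t}(q) = f_{v,y}(q) \geq f_{v,y}(p(y)) = f_{v,x}(q)$. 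Combining the two inequalities yields $f_{v,x} = f_{v,x-t}$.

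The only genuine content beyond this bookkeeping is the observation that the reparametrisation is symmetric and that the twisted $\sigma$-derivation $\delta' = \delta - \d_{\sigma,t}$ remains compatible with $v$; this is what licenses the second, reversed application of Lemma \ref{lem: subtracting t from the variable}. I expect the main place to slip is in correctly matching the abstract polynomial $p$ to the coefficients of $q$ in the appropriate basis each time, rather than in any substantial estimate.
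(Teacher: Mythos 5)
Your proof is correct and follows essentially the same route as the paper's: a sandwich argument deducing both inequalities from Lemma \ref{lem: subtracting t from the variable}, applied once in the $x$-presentation and once in the reparametrised $(x-t)$-presentation. Your explicit verification that $\delta' = \delta - \d_{\sigma,t}$ remains compatible with $v$ (so the lemma legitimately applies in the $y$-world) is exactly the point the paper's terse ``changing variables throughout'' steps leave implicit, and it is a worthwhile addition.
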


\begin{proof}
Take an arbitrary element $p(x) \in R[x]$. Then

\begin{align*}
f_{v,x}(p(x)) &\leq f_{v,x}(p(x+t))& \text{applying Lemma \ref{lem: subtracting t from the variable} to } -t\\
&= f_{v,x-t}(p(x)) &\text{changing variables } x \mapsto x-t \text{ throughout}\\
&\leq f_{v,x-t}(p(x-t)) & \text{applying Lemma \ref{lem: subtracting t from the variable} to } t\\
&= f_{v,x}(p(x))& \text{changing variables } x \mapsto x+t \text{ throughout},
\end{align*}
from which we can conclude that $f_{v,x}(p(x)) = f_{v,x-t}(p(x))$.
\end{proof}

Next, we show that $f_{v,x}$ and $f_{v,y}$ are equal when $y = ax$ where $a\in R^\times$ and $v(a) = v(a^{-1}) = 0$.

Write $(ax)^n = \gamma_{n,0} x^n + \gamma_{n,1} x^{n-1} + \dots + \gamma_{n,n-1} x + \gamma_{n,n}$.

\begin{lem}\label{lem: values of structure constants, multiplicative case}
$v(\gamma_{n,i}) \geq i$.
\end{lem}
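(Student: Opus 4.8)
The plan is to mirror the additive case treated in Lemma~\ref{lem: values of structure constants, additive case}. First I would derive a recurrence for the structure constants $\gamma_{n,i}$ by computing $(ax)^{n+1} = (ax)\cdot(ax)^n$ directly. Using the defining rule $x\gamma = \sigma(\gamma)x + \delta(\gamma)$, each term $ax\cdot\gamma_{n,i}x^{n-i}$ expands as $a\sigma(\gamma_{n,i})x^{n+1-i} + a\delta(\gamma_{n,i})x^{n-i}$; collecting the coefficient of $x^{n+1-i}$, and shifting the index $j = i-1$ in the $\delta$-part, yields
\[
\gamma_{n+1,0} = a\sigma(\gamma_{n,0}), \qquad \gamma_{n+1,i} = a\sigma(\gamma_{n,i}) + a\delta(\gamma_{n,i-1})\ (1\le i\le n), \qquad \gamma_{n+1,n+1} = a\delta(\gamma_{n,n}),
\]
with $\gamma_{0,0} = 1$.

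Then I would prove $v(\gamma_{n,i}) \geq i$ by induction on $n$, the base case $n = 0$ being $v(1) = 0$. In the inductive step I would bound each summand of the recurrence separately and combine them using the ultrametric inequality (filtration axiom (ii)). For the $\sigma$-term, compatibility gives $v(\sigma(\gamma_{n,i})) = v(\gamma_{n,i}) \geq i$, and since the filtration is positive we have $v(a) \geq 0$, so $v(a\sigma(\gamma_{n,i})) \geq i$. For the $\delta$-term, the crucial input is that compatibility forces $v(\delta(r)) > v(r)$, which for an integer-valued filtration means $v(\delta(r)) \geq v(r) + 1$; applied to $\gamma_{n,i-1}$ this gives $v(\delta(\gamma_{n,i-1})) \geq (i-1) + 1 = i$, hence $v(a\delta(\gamma_{n,i-1})) \geq i$. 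The boundary coefficients $\gamma_{n+1,0}$ and $\gamma_{n+1,n+1}$ are handled by the same two estimates.

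The step I expect to require the most care is the index bookkeeping in the recurrence — keeping track of the fact that the $\delta$-contribution lands one $x$-degree lower than the $\sigma$-contribution, so that the index shift $j = i-1$ is correct. Conceptually, though, the whole argument hinges on a single numerical coincidence: dropping one factor of $x$ (which would cost $1$ in the target filtration) is exactly compensated by applying $\delta$ (which raises $v$ by at least $1$), so the bound $v(\gamma_{n,i}) \geq i$ propagates cleanly. Note that only $v(a) \geq 0$ is used here, which holds automatically in a positive filtration; the stronger hypothesis $v(a) = v(a^{-1}) = 0$ will instead be needed to establish the reverse inequality, and hence the full equality $f_{v,x} = f_{v,ax}$, by an argument parallel to Proposition~\ref{propn: additive reparametrisation}.
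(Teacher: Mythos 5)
Your proof is correct and follows the paper's argument exactly: the paper derives the identical recurrence $\gamma_{n+1,0} = a\sigma(\gamma_{n,0})$, $\gamma_{n+1,i} = a\sigma(\gamma_{n,i}) + a\delta(\gamma_{n,i-1})$, $\gamma_{n+1,n+1} = a\delta(\gamma_{n,n})$ and then inducts as in Lemma~\ref{lem: values of structure constants, additive case}, which is precisely the induction you spell out using $v(\sigma(r)) = v(r)$, $v(\delta(r)) \geq v(r) + 1$, and $v(a) \geq 0$. Your closing observation --- that only positivity of $v(a)$ is needed here, with $v(a) = v(a^{-1}) = 0$ reserved for the reverse inequality in Proposition~\ref{propn: multiplicative reparametrisation} --- is also accurate.
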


\begin{proof}
As in Lemma \ref{lem: values of structure constants, additive case}, calculating $(ax)^{n+1} = ax(\gamma_{n,0} x^n + \gamma_{n,1} x^{n-1} + \dots + \gamma_{n,n-1} x + \gamma_{n,n})$ gives
$$\begin{cases}
\gamma_{n+1,0} = a\sigma(\gamma_{n,0}),\\
\gamma_{n+1,i} = a\sigma(\gamma_{n,i}) + a\delta(\gamma_{n,i-1})& (1\leq i\leq n),\\
\gamma_{n+1,n+1} = a\delta(\gamma_{n,n}),
\end{cases}$$
and we may perform induction as in Lemma \ref{lem: values of structure constants, additive case}.
\end{proof}

\begin{lem}\label{lem: multiplying the variable by a}
$f_{v,x}(p(ax)) \geq f_{v,x}(p(x))$.
\end{lem}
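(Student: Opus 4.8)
The plan is to follow verbatim the strategy used in the proof of Lemma \ref{lem: subtracting t from the variable}, now substituting the multiplicative structure constants $\gamma_{n,i}$ and the bound from Lemma \ref{lem: values of structure constants, multiplicative case} for their additive counterparts. Concretely, I would take an arbitrary polynomial $p(x) = r_0 + r_1 x + \dots + r_m x^m \in R[x]$, substitute $x \mapsto ax$, and expand using $(ax)^n = \sum_{i=0}^n \gamma_{n,i} x^{n-i}$ to obtain
$$p(ax) = \sum_{n=0}^m r_n \left( \sum_{i=0}^n \gamma_{n,i} x^{n-i} \right) = \sum_{j=0}^m \left( \sum_{n=j}^m r_n \gamma_{n,n-j} \right) x^j,$$
where the second equality collects the coefficient of each $x^j$ after reindexing by $j := n - i$.

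The estimate then follows by feeding this expansion into the definition of $f_{v,x}$ and applying the filtration axioms. I would bound
$$f_{v,x}(p(ax)) = \inf_{0\leq j\leq m} \left\{ v\!\left( \sum_{n=j}^m r_n \gamma_{n,n-j} \right) + j \right\} \geq \inf_{0\leq j\leq m}\, \inf_{j\leq n\leq m} \{ v(r_n) + v(\gamma_{n,n-j}) + j \},$$
and then invoke Lemma \ref{lem: values of structure constants, multiplicative case}: since $v(\gamma_{n,n-j}) \geq n-j$, each bracketed term is at least $v(r_n) + (n-j) + j = v(r_n) + n$. Taking infima, the double infimum over pairs $(j,n)$ with $0 \leq j \leq n \leq m$ collapses to $\inf_{0\leq n\leq m} \{ v(r_n) + n \} = f_{v,x}(p(x))$, which is the desired inequality.

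The argument is essentially identical to the additive case, and I expect no genuine obstacle: the only point requiring mild care is the index bookkeeping when passing from the double infimum back to a single infimum over $n$, together with checking that the bound $v(\gamma_{n,n-j}) \geq n-j$ is being applied with the correct second index. Notably, the hypotheses $v(a) = v(a^{-1}) = 0$ are not needed for this one-sided inequality; they enter only later, via the recursion in Lemma \ref{lem: values of structure constants, multiplicative case} and the symmetric argument (applying this lemma to $a^{-1}$) that upgrades the inequality to the equality $f_{v,x} = f_{v,ax}$.
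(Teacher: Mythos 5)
Your proposal is correct and follows exactly the paper's own argument: the paper likewise expands $p(ax) = \sum_{j=0}^m \bigl(\sum_{n=j}^m r_n \gamma_{n,n-j}\bigr)x^j$ via the reindexing $j := n-i$ and then notes that the estimate ``proceeds exactly as in the proof of Lemma \ref{lem: subtracting t from the variable}'', which is the double-infimum computation you spell out using $v(\gamma_{n,n-j}) \geq n-j$ from Lemma \ref{lem: values of structure constants, multiplicative case}. Your closing observation is also accurate, with the small caveat that the recursion in Lemma \ref{lem: values of structure constants, multiplicative case} really only needs $v(a) \geq 0$; the full hypothesis $v(a) = v(a^{-1}) = 0$ is used in Proposition \ref{propn: multiplicative reparametrisation} when the lemma is applied symmetrically to $a^{-1}$.
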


\begin{proof}
\begin{align*}
p(ax) = \sum_{n=0}^m r_n (ax)^n &= \sum_{n=0}^m r_n\left(\sum_{i=0}^n \gamma_{n,i} x^{n-i}\right)\\
&= \sum_{j=0}^m \left(\sum_{n=j}^m r_{n} \gamma_{n,n-j}\right) x^j & \text{where } j := n - i,
\end{align*}
and so
\begin{align*}
f_{v,x}(p(ax)) &= f_{v,x}\left( \sum_{j=0}^m \left(\sum_{n=j}^m r_{n} \gamma_{n,n-j}\right) x^j\right),
\end{align*}
and the proof now proceeds exactly as in the proof of Lemma \ref{lem: subtracting t from the variable}.
\end{proof}

\begin{propn}\label{propn: multiplicative reparametrisation}
$f_{v,x} = f_{v,ax}$.
\end{propn}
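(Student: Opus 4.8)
The plan is to reproduce the skeleton of Proposition \ref{propn: additive reparametrisation}: I would prove $f_{v,x}(q) = f_{v,ax}(q)$ for every $q \in R[x]$ by establishing the two inequalities separately, using Lemma \ref{lem: multiplying the variable by a} for $a$ in the given ring for one direction, and the same lemma for $a^{-1}$ in a reparametrised ring for the other. The first point to record is that $a^{-1}$ is an admissible substitute for $a$: it lies in $R^\times$ and $v(a^{-1}) = v((a^{-1})^{-1}) = v(a) = 0$, so any statement proved for $a$ under the standing hypotheses of this subsection applies equally to $a^{-1}$.

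For the first inequality, take $q \in R[x]$ and expand it in the basis $\{(ax)^n\}$, say $q = \sum_n c_n (ax)^n$, so that $f_{v,ax}(q) = \inf_n\{v(c_n) + n\}$. Setting $g(x) := \sum_n c_n x^n$ we have $q = g(ax)$ and $f_{v,x}(g(x)) = \inf_n\{v(c_n)+n\} = f_{v,ax}(q)$; Lemma \ref{lem: multiplying the variable by a} then gives
\[
f_{v,x}(q) = f_{v,x}(g(ax)) \geq f_{v,x}(g(x)) = f_{v,ax}(q).
\]
The reverse inequality is the same argument with the roles of $x$ and $ax$ interchanged. Using the reparametrisation $R[[x;\sigma,\delta]] = R[[ax; \c_a\sigma, a\delta]]$ recorded in \S\ref{subsec: reparametrising skew polynomials}, I regard $R[x]$ as $R[u]$ with distinguished generator $u := ax$, skew derivation $(\c_a\sigma, a\delta)$, and standard filtration $f_{v,u} = f_{v,ax}$; since $a^{-1}u = x$, running the previous paragraph's argument in this presentation with the unit $a^{-1}$ yields $f_{v,ax}(q) \geq f_{v,x}(q)$. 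Combining the two inequalities gives $f_{v,x} = f_{v,ax}$.

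The step I expect to be the main obstacle is the legitimacy of this second application: I must know that $(\c_a\sigma, a\delta)$ is itself a \emph{compatible} skew derivation on $(R,v)$, for only then do the standing hypotheses — and hence Lemma \ref{lem: multiplying the variable by a} — hold in the reparametrised presentation. This is exactly where $v(a) = v(a^{-1}) = 0$ is used: it forces $\c_a$ to be $v$-preserving (so $\c_a\sigma$ preserves $v$), while $v(a\delta(r)) \geq v(a) + v(\delta(r)) = v(\delta(r)) > v(r)$ shows $a\delta$ strictly raises $v$. Once this compatibility check is in place the argument goes through verbatim; this reparametrisation is the rigorous content underlying the more informal ``changing variables $x \mapsto ax$'' bookkeeping that one would otherwise insert into a chain of (in)equalities mirroring Proposition \ref{propn: additive reparametrisation}.
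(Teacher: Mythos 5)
Your proof is correct and is essentially the paper's own argument: the paper's four-line chain proving Proposition \ref{propn: multiplicative reparametrisation} consists of exactly your two inequalities (Lemma \ref{lem: multiplying the variable by a} applied to $a$ in the original presentation and to $a^{-1}$ in the presentation with generator $ax$), and your explicit check that $(\c_a\sigma, a\delta)$ is a compatible skew derivation is precisely the content hidden in the paper's ``changing variables $x \mapsto ax$ throughout'' bookkeeping. One pen-slip to fix: you should cite the polynomial-level identity $R[x;\sigma,\delta] = R[ax;\c_a\sigma,a\delta]$ from \S\ref{subsec: reparametrising skew polynomials} rather than the power series version $R[[x;\sigma,\delta]] = R[[ax;\c_a\sigma,a\delta]]$, since the latter is Theorem \ref{B}(i) itself and lies downstream of this proposition --- your argument in fact only uses the polynomial identity, so no circularity actually arises.
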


\begin{proof}
Take an arbitrary element $p(x) \in R[x]$. Then
\begin{align*}
f_{v,x}(p(x)) &\leq f_{v,x}(p(a^{-1}x))& \text{applying Lemma \ref{lem: multiplying the variable by a} to } a^{-1}\\
&= f_{v,ax}(p(x)) &\text{changing variables } x \mapsto ax \text{ throughout}\\
&\leq f_{v,ax}(p(ax)) & \text{applying Lemma \ref{lem: multiplying the variable by a} to } a\\
&= f_{v,x}(p(x))& \text{changing variables } x \mapsto a^{-1}x \text{ throughout},
\end{align*}
from which we can conclude that $f_{v,x}(p(x)) = f_{v,ax}(p(x))$.
\end{proof}

Finally, we fix any $y\in R[x]$ such that $R[x] = R[y]$ and $f_{v,x} = f_{v,y}$. Denote this common filtration by $f$. It follows that:

\begin{thm}\label{thm: reparametrisation conditions}
$R[[x]] = R[[y]]$ as filtered left $R$-modules.
\end{thm}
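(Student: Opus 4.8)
The plan is to recognise both $R[[x]]$ and $R[[y]]$ as \emph{completions} of one and the same filtered $R$-module, and then to invoke the uniqueness of completions. Write $P := R[x] = R[y]$, an equality of left $R$-modules by hypothesis, and recall that $f_{v,x}$ and $f_{v,y}$ agree on $P$; denote this common filtration by $f$. The definition in \S\ref{subsec: defining skew power series rings} tells us that $f_{v,x}$ is precisely the restriction to $P = R[x]$ of the standard (separated, discrete) filtration on $R[[x]]$, that $R[[x]]$ is complete with respect to this filtration, and that $R[x]$ is dense in $R[[x]]$. In other words, $R[[x]]$ is a completion of the filtered module $(P, f_{v,x})$, and the identical three facts for $y$ exhibit $R[[y]]$ as a completion of $(P, f_{v,y})$.

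First I would make the single crucial observation underlying everything: since $f_{v,x} = f_{v,y}$, these two filtered modules $(P, f_{v,x})$ and $(P, f_{v,y})$ are literally the \emph{same} filtered module $(P, f)$. This is exactly where the hypothesis does its work, and it is instructive to locate the failure in Example \ref{ex: when reparametrisation doesn't work}: there $f_{v,x}(y^n) = 0$ while $f_{v,y}(y^n) = n$, so the two filtrations on $P$ genuinely differ and the argument cannot even begin. Here, by contrast, $f(y^n) = f_{v,y}(y^n) = n \to \infty$, so powers of $y$ tend to $0$ in the $f$-topology.

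I would then conclude via the uniqueness of the completion: a complete filtered module containing a given filtered module densely, with the subspace filtration, is unique up to a unique filtered isomorphism fixing that submodule. Applied to $(P,f) \subseteq R[[x]]$ and $(P,f) \subseteq R[[y]]$, this yields a canonical filtered $R$-module isomorphism $R[[x]] \to R[[y]]$ restricting to the identity on $P$, under which we identify the two modules; the induced filtrations agree because each is the unique continuous extension of $f$ from the dense submodule $P$. Concretely, the isomorphism is realised by $\sum_n r_n y^n \mapsto \sum_n r_n y^n$, the right-hand sum computed in $R[[x]]$: it converges there precisely because $f(r_n y^n) \geq v(r_n) + n \geq n \to \infty$ (using that $R$ is positively filtered) and $R[[x]]$ is complete.

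I expect the only genuine content to be \emph{surjectivity} of this identification --- that every power series in $x$ is in fact a convergent power series in $y$ --- which is not evident from the explicit formula but falls out of the completion argument: the image of $R[[y]]$ is complete, hence closed in $R[[x]]$, and it contains the dense submodule $P$, so it is all of $R[[x]]$. Checking that the map is a filtration-preserving bijection (injectivity being immediate from separatedness, and preservation of $f$ following by continuity from its validity on the dense set $P$) then completes the proof. The one hypothesis doing all the work is $f_{v,x} = f_{v,y}$, which simultaneously guarantees both the convergence and the density on which the argument rests.
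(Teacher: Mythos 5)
Your proof is correct and is essentially the paper's own argument: the paper states the theorem as an immediate consequence (``It follows that\dots'') of having fixed $y$ with $R[x]=R[y]$ and $f_{v,x}=f_{v,y}$, the intended justification being exactly your observation that $R[[x]]$ and $R[[y]]$ are both completions of the single filtered module $(R[x],f)$, hence canonically identified. Your additional details --- convergence of $\sum_n r_n y^n$ in $R[[x]]$ from $f(r_n y^n)\geq v(r_n)+n\geq n$, surjectivity via the image being complete (hence closed) and containing the dense submodule $P$, and injectivity plus preservation of $f$ by continuity from the dense subset --- correctly fill in what the paper leaves implicit.
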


\textit{Proof of Theorem \ref{B}.} In case (i), set $y = ax$, so that $f := f_{v,x} = f_{v,y}$ by Proposition \ref{propn: multiplicative reparametrisation}; in case (ii), set $y = x-t$, and use Proposition \ref{propn: additive reparametrisation}. In both cases, $R[x] = R[y]$. Now Theorem \ref{thm: reparametrisation conditions} implies that $R[[y]]$ and $R[[x]]$ can be identified as filtered modules, and the multiplication data has already been calculated in \S \ref{subsec: reparametrising skew polynomials}, so the conclusion follows. \qed

\section{Skew derivations on semisimple artinian rings}

\subsection{Reducing to orbits}\label{subsec: reducing to orbits}

For now, we do \textit{not} assume any of the Hypotheses (H1--3), and we let $(R,v)$ be an arbitrary filtered ring such that $R$ admits a decomposition $R \cong B\times C$ (as unfiltered rings).

We will abuse notation and write $R = B\times C$ (as unfiltered rings). We will also write $B$ (resp. $C$) for the ideal $B\times 0$ (resp. $0\times C$) of $R$, so that there are inclusion maps $j_B: B\to R$ and $j_C: C\to R$ and projection maps $\pi_B: R\to B$ and $\pi_C: R\to C$.

Suppose further that the filtration on $R$ is complete and positive, and that $R$ admits a skew derivation $(\sigma,\delta)$ which restricts to skew derivations on $B$ and $C$. (That is, setting $\sigma_B = \pi_B \sigma j_B$ and $\delta_B = \pi_B \delta j_B$, we have that $(\sigma_B, \delta_B)$ is a skew derivation on $B$, and likewise for $C$.)

Write $v_B$ and $v_C$ for the restrictions of $v$ to $B$ and $C$ respectively. In general, even if $(\sigma,\delta)$ is compatible with $v$, it may not be true that $(\sigma_B, \delta_B)$ is compatible with $v_B$, and so we must restrict to the case in which the decomposition $R\cong B\times C$ and the filtration $v$ interact nicely. The following lemma is an immediate consequence of the definition of the product filtration, as in Definition \ref{defn: filtrations}.

\begin{lem}\label{lem: skew derivations restrict to sigma-orbits}
If $v = v_B \times v_C$, then $(\sigma_B, \delta_B)$ is compatible with $v_B$, and $(\sigma_C, \delta_C)$ is compatible with $v_C$.\qed
\end{lem}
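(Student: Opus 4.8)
The plan is to treat the statement as a pair of inequalities from Definition \ref{defn: compatible}. The hypotheses already guarantee that $(\sigma_B,\delta_B)$ is a skew derivation, so all that remains is to check $v_B(\sigma_B(b))=v_B(b)$ and $v_B(\delta_B(b))>v_B(b)$ for every $0\neq b\in B$, together with the symmetric statements for $C$; I will obtain these by pushing the corresponding inequalities for $(\sigma,\delta)$ on $(R,v)$ through the structure maps $j_B$ and $\pi_B$. The first thing I would record is that $\sigma$ and $\delta$ actually preserve the ideal $B=B\times 0$, so that $\sigma_B=\sigma|_B$, $\delta_B=\delta|_B$ and the projection $\pi_B$ in their definitions does nothing. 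This is the meaning of ``restricts to skew derivations'', and it is also forced by $\sigma_B\in\Aut(B)$: for the central idempotent $e_B=(1,0)$, bijectivity of $\sigma_B$ pins down $\sigma(e_B)=e_B$ (hence $\sigma(B)=B$), and then the Leibniz rule $\delta(e_B)=\delta(e_B)e_B+\sigma(e_B)\delta(e_B)=2e_B\delta(e_B)$ forces $\delta(e_B)=0$, so $\delta(B)\subseteq B$. Thus $\sigma\circ j_B=j_B\circ\sigma_B$ and $\delta\circ j_B=j_B\circ\delta_B$.

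Next I would unwind the hypothesis $v=v_B\times v_C$. By Definition \ref{defn: filtrations}.1 this means $v(b,c)=\min\{v_B(b),v_C(c)\}$ with $v_B,v_C$ honest filtrations; in particular $v(j_B(b))=v(b,0)=\min\{v_B(b),v_C(0)\}=v_B(b)$ since $v_C(0)=\infty$, so $v\circ j_B=v_B$ records $v_B$-values exactly with no interference from the $C$-factor. Combining this with the previous paragraph and the compatibility of $(\sigma,\delta)$ with $v$, for $0\neq b\in B$ I obtain
\[
v_B(\sigma_B(b))=v(j_B\sigma_B(b))=v(\sigma j_B(b))=v(j_B(b))=v_B(b)
\]
and, using the strict inequality $v(\delta(r))>v(r)$ at $r=j_B(b)$,
\[
v_B(\delta_B(b))=v(\delta j_B(b))>v(j_B(b))=v_B(b).
\]
The argument for $(\sigma_C,\delta_C)$ and $v_C$ is identical after interchanging the roles of the two factors.

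I expect no genuine obstacle: the whole content is the bookkeeping that passing to a factor via $j_B$ and $\pi_B$ neither creates nor destroys filtration values. The only step that uses the hypothesis in an essential way is the identity $v\circ j_B=v_B$ of the second paragraph, which encodes that $v_B$ is a filtration splitting cleanly off $v_C$; this is exactly the force of $v=v_B\times v_C$ and is why the conclusion is ``immediate'' once that hypothesis is in place. I would simply flag that, were $v$ not the product of its restrictions, $v_B$ would not in general interact with the decomposition this transparently, and the clean reduction above would break down.
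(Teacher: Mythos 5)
Your proof is correct, and its skeleton is exactly the direct verification that the paper's ``immediate consequence of the definition of the product filtration'' gestures at (the paper prints no proof at all, just \qed). The genuine difference is that you \emph{derive}, rather than import from context, the fact that $\sigma$ and $\delta$ preserve the factor $B$: in the paper's intended application (\S\ref{subsec: reducing to orbits}) the containments $\sigma(B)=B$ and $\delta(B)\subseteq B$ come for free from Property \ref{props: cauchon-robson facts about sigma and delta}.1, since $B$ is a union of $\rho$-orbits, whereas you extract them abstractly from the definition $\sigma_B=\pi_B\sigma j_B$. Your claim that bijectivity of $\sigma_B$ pins down $\sigma(e_B)=e_B$ is true but deserves one more line than you give it: writing $\sigma(e_B)=(f,g)$ with $f,g$ central idempotents, surjectivity of $\sigma_B$ forces $f=1_B$, and injectivity forces $g=0$ because $\ker\sigma_B=\sigma^{-1}(0\times gC)$ is a nonzero ideal whenever $g\neq 0$. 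Similarly, in passing from $\delta(e_B)=2e_B\delta(e_B)$ to $\delta(e_B)=0$, the correct step is to multiply by $e_B$ (getting $e_B\delta(e_B)=2e_B\delta(e_B)$, so $e_B\delta(e_B)=0$) rather than to cancel the $2$, so that no characteristic assumption sneaks in. One correction to your closing commentary, though: the paper defines $v_B$ and $v_C$ as the \emph{restrictions} of $v$ to $B$ and $C$, so the identity $v\circ j_B=v_B$ is definitional, not a consequence of $v=v_B\times v_C$; and indeed, once your preservation argument is in place, your proof never actually uses the product hypothesis anywhere --- compatibility of $(\sigma_B,\delta_B)$ with $v|_B$ follows from compatibility of $(\sigma,\delta)$ with $v$ alone. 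So your proof establishes slightly more than the lemma claims, and your warning that ``the clean reduction would break down'' without $v=v_B\times v_C$ is misplaced: that hypothesis earns its keep not here but in Proposition \ref{propn: reducing to sigma-orbits}, where one needs the projections not to raise filtration values (i.e.\ $v(j_B\pi_B(r))\geq v(r)$) for the map $\varphi$ there to be a \emph{filtered} isomorphism.
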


Hence, under the assumption $v = v_B \times v_C$, we may define the filtered rings
\begin{itemize}
\item $B[[x_B; \sigma_B, \delta_B]]$, with filtration $f_B$, satisfying $f_B|_B = v_B$ and $f_B(x_B) = 1$,
\item $C[[x_C; \sigma_C, \delta_C]]$, with filtration $f_C$, satisfying $f_C|_C = v_C$ and $f_C(x_C) = 1$,
\end{itemize}
as in (\ref{eqn: x-filtration on SPS ring}).

\begin{propn}\label{propn: reducing to sigma-orbits}
Suppose that $v = v_B\times v_C$. Then there is an isomorphism of filtered rings 
$\varphi: R[[x; \sigma, \delta]] \to B[[x_B; \sigma_B, \delta_B]] \times C[[x_C; \sigma_C, \delta_C]]$.
\end{propn}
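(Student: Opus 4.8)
The plan is to recognise the decomposition $R = B \times C$ at the level of orthogonal central idempotents, lift these to the skew power series ring $S := R[[x;\sigma,\delta]]$, show they remain central there, and then read off the product decomposition; identifying the two factors and checking filtrations is the only remaining work. First I would record the behaviour of the central idempotents $e_B := (1_B, 0)$ and $e_C := (0, 1_C)$ of $R$ under $(\sigma,\delta)$. Since $(\sigma,\delta)$ restricts to $B$ and $C$, the automorphism $\sigma$ preserves these ideals and hence fixes their identity elements, giving $\sigma(e_B) = e_B$ and $\sigma(e_C) = e_C$; applying the $\sigma$-Leibniz rule to $e_B = e_B^2$ and using $\sigma(e_B)=e_B$ together with the centrality of $e_B$ then forces $\delta(e_B) = 0$, and symmetrically $\delta(e_C) = 0$.

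Next I would show that $e_B$ (and symmetrically $e_C$) is central in $S$. It commutes with every $r \in R$ because it is central in $R$, and the previous step gives $x e_B = \sigma(e_B)x + \delta(e_B) = e_B x$; an easy induction then yields $x^n e_B = e_B x^n$, so for a general element $\sum_n r_n x^n$ we get $\big(\sum_n r_n x^n\big) e_B = \sum_n r_n e_B x^n = e_B \big(\sum_n r_n x^n\big)$. Since $e_B + e_C = 1$ and $e_B e_C = 0$ with both central, this produces a ring direct product $S = e_B S \times e_C S$, and the candidate isomorphism is $\varphi(s) = (e_B s, e_C s)$.

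Then I would identify the factor $e_B S$ with $B[[x_B;\sigma_B,\delta_B]]$, which is a well-defined skew power series ring because $(\sigma_B,\delta_B)$ is $v_B$-compatible by Lemma \ref{lem: skew derivations restrict to sigma-orbits}. Setting $x_B := e_B x$, centrality gives $(e_B x)^n = e_B x^n$, so (using that $e_B S$ is a closed summand of the complete ring $S$) one finds $e_B S = \{\sum_n b_n x_B^n : b_n \in B\}$ with $b_n = e_B r_n$, and for $r \in B$ the identity $x_B r = e_B(\sigma(r)x + \delta(r)) = \sigma_B(r) x_B + \delta_B(r)$ holds because $\sigma(r),\delta(r) \in B$. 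This exhibits $e_B S \cong B[[x_B;\sigma_B,\delta_B]]$ as rings, and likewise for $C$. The filtration check is where the hypothesis $v = v_B \times v_C$ is essential: writing $r_n = (\pi_B r_n, \pi_C r_n)$ so that $v(r_n) = \min\{v_B(\pi_B r_n), v_C(\pi_C r_n)\}$, we have
\[
f\Big(\sum_n r_n x^n\Big) = \inf_n\{v(r_n)+n\} = \min\Big\{\,\inf_n\{v_B(\pi_B r_n)+n\},\ \inf_n\{v_C(\pi_C r_n)+n\}\,\Big\},
\]
which is exactly the product filtration $\min\{f_B, f_C\}$ evaluated at $\varphi(\sum_n r_n x^n)$. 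Hence $\varphi$ is an isomorphism of filtered rings.

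The main obstacle is the centrality step: the whole argument hinges on $\sigma(e_B) = e_B$ and $\delta(e_B) = 0$, since these are precisely what allow $e_B$ to survive as a central idempotent in $S$. Once that is established, the product decomposition and the identification of each factor as a skew power series ring are formal, and the only further input specific to this setting is the equality $v = v_B \times v_C$, which both guarantees (via Lemma \ref{lem: skew derivations restrict to sigma-orbits}) that the target factors are well-defined and ensures that $\varphi$ preserves the filtration.
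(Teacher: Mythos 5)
Your proposal is correct and is in substance the paper's own argument: your map $\varphi(s) = (e_B s,\, e_C s)$, after identifying $e_B S$ with $B[[x_B;\sigma_B,\delta_B]]$ via $x_B = e_B x$, is exactly the componentwise map $\sum_{i\geq 0} r_i x^i \mapsto \left(\sum_{i\geq 0} \pi_B(r_i)x_B^i,\ \sum_{i\geq 0} \pi_C(r_i)x_C^i\right)$ that the paper writes down and declares ``straightforward to check''. Your idempotent computation ($\sigma(e_B)=e_B$, $\delta(e_B)=0$, hence $e_B$ central in $R[[x;\sigma,\delta]]$) together with the product-filtration calculation is precisely the content of that omitted check, so the two proofs coincide.
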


\begin{proof}
It is straightforward to check that the maps
\begin{align*}
\varphi: R[[x; \sigma, \delta]] &\to B[[x_B; \sigma_B, \delta_B]] \times C[[x_C; \sigma_C, \delta_C]]\\
\sum_{i\geq 0} r_i x^i &\mapsto \left(\sum_{i\geq 0} \pi_B(r_i) x_B^i, \sum_{i\geq 0} \pi_C(r_i) x_C^i\right)
\end{align*}
and
\begin{align*}
\theta: B[[x_B; \sigma_B, \delta_B]] \times C[[x_C; \sigma_C, \delta_C]] &\to R[[x; \sigma, \delta]]\\
\left(\sum_{i\geq 0} b_i x_B^i, \sum_{i\geq 0} c_i x_C^i\right)&\mapsto \sum_{i\geq 0} (j_B(b_i),j_C(c_i)) x^i
\end{align*}
are the mutually inverse filtered isomorphisms as required.
\end{proof}

Let $Q'$ be an arbitrary semisimple artinian filtered ring admitting a skew derivation $(\sigma,\delta)$, and write the minimal nonzero ideals of $Q'$ as $A_1,\dots, A_e$, so that $Q' = \prod_{i=1}^e A_i$. In the case where the $A_i$ fall into several $\sigma$-orbits, write $\rho$ for the permutation of the indexing set $\{1, \dots, e\}$ induced on the set $\{A_1, \dots, A_e\}$ by $\sigma$ as in Property \ref{props: cauchon-robson facts about sigma and delta}.1. Let $\mathcal{S}$ be a union of (some) orbits of $\rho$ and $\mathcal{S}' = \{1, \dots, e\} \setminus \mathcal{S}$, and suppose (to avoid trivial cases) that both $\mathcal{S}$ and $\mathcal{S}'$ are nonempty. Now set $B' = \prod_{i\in\mathcal{S}} A_i$ and $C' = \prod_{i\in \mathcal{S}'} A_i$: it follows from Property \ref{props: cauchon-robson facts about sigma and delta}.1 that $(\sigma_{B'}, \delta_{B'})$ restricts to a skew derivation on $B'$, and likewise for $C'$.

Assume now that each $A_i \cong M_{n_i}(F_i)$, where $n_i \geq 1$ is some positive integer and $F_i$ is the Goldie ring of quotients of a complete discrete valuation ring $D_i$. Set $\O_i$ to be the preimage in $A_i$ of $M_{n_i}(D_i)$, so that $\O' = \O_1 \times \dots \times \O_e$ is a maximal order in $Q'$. Moreover, if $j=\rho(i)$ then $A_j=\sigma(A_i)$ so $M_{n_i}(F_i)\cong M_{n_j}(F_j)$, which implies that $n_i=n_j$ and $F_i\cong F_j$.  

Suppose that all of these rings are given their natural filtrations: that is,
\begin{itemize}[noitemsep]
\item each $D_i$ retains its discrete valuation, and $F_i$ inherits the $J(D_i)$-adic valuation (see \S \ref{subsec: DVRs}),
\item $M_{n_i}(D_i)$ and $M_{n_i}(F_i)$ are given the corresponding matrix filtrations (see Definition \ref{defn: filtrations}.2),
\item $\O_i$ and $A_i$ inherit their filtrations from $M_{n_i}(D_i)$ and $M_{n_i}(F_i)$ under the above isomorphisms, and
\item $\O'$ and $Q'$ are given the product filtrations (see Definition \ref{defn: filtrations}.1) from the $\O_i$ and $A_i$ respectively.
\end{itemize}
Then $\O'$, $B'$ and $C'$ as defined above will satisfy the hypotheses of Proposition \ref{propn: reducing to sigma-orbits}. Moreover, if $\mathcal{S}$ is taken to be a single orbit of $\rho$ with $|\mathcal{S}| = d$, then (after renumbering so that $\mathcal{S} = \{1, \dots, d\}$ and writing $n = n_1 = \dots = n_d$) the ring $\O := \O'\cap B'$ as defined above, its Goldie ring of quotients $Q := B'$, etc. will satisfy Hypotheses (H1--3).

\subsection{Untwisting inner automorphisms}

In this subsection, we assume the full force of Hypotheses (H1--3) and adopt their notation.

Without loss of generality, reordering the $A_i$ if necessary, we will set $\sigma(A_i) = A_{i+1}$ for $1\leq i\leq d-1$ and $\sigma(A_d) = A_1$.

We may now invoke Property \ref{props: cauchon-robson facts about sigma and delta}.2. In particular, there is a decomposition $\sigma = \eta \circ M_n(\tau)^\iota$, where $\eta$ is an inner automorphism of $Q$, say $\eta = \c_a$ for some $a\in Q^\times$, and $\tau$ is an automorphism of $F$.

\begin{lem}\label{lem: eta and M_n(tau) preserve O}
Both $\eta$ and $M_n(\tau)^\iota$ preserve $\O$.
\end{lem}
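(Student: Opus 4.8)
The plan is to reduce the whole statement to the single claim that $\tau(D)=D$, verify that by testing the compatibility of $\sigma$ on matrix units, and then recover the statement for $\eta$ for free. First I would record that $\sigma$ itself preserves $\O$: since $(\sigma,\delta)$ is compatible with $v_Q$ we have $v_Q(\sigma(q))=v_Q(q)$ for all $q\in Q$, and (as noted in the Remarks after the Hypotheses) $v_Q$ is the $J(\O)$-adic filtration, so that $\O=\{q\in Q: v_Q(q)\geq 0\}$ is exactly the non-negative part of $v_Q$ and is therefore $\sigma$-stable. Because $\sigma=\eta\circ M_n(\tau)^\iota$, once $M_n(\tau)^\iota$ is known to preserve $\O$ it follows immediately that $\eta=\sigma\circ(M_n(\tau)^\iota)^{-1}$ does too. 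Hence it is enough to prove that $M_n(\tau)^\iota$ preserves $\O$, i.e. (applying $\iota$) that $M_n(\tau)$ preserves $M_n(D)$, i.e. that $\tau(D)=D$.

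To see this I would transport $\sigma$ across $\iota$, writing $\bar\sigma:=\iota\sigma\iota^{-1}=\c_{\bar a}\circ M_n(\tau)$ on $M_n(F)$ with $\bar a=\iota(a)$; since $\iota$ is a filtered isomorphism and $\sigma$ is compatible, $\bar\sigma$ preserves the matrix filtration $M_n(v_F)$. The key structural input is that $M_n(\tau)$ acts entrywise, and so fixes every matrix unit, $M_n(\tau)(e_{ij})=e_{ij}$; hence $\bar\sigma(e_{ij})=\bar a\,e_{ij}\,\bar a^{-1}$. Writing $\bar a=(a_{kl})$ and $\bar a^{-1}=(b_{kl})$, and using that $v_F$ is multiplicative on the division ring $F$, the $(k,l)$ entry of $\bar a\,e_{ij}\,\bar a^{-1}$ is $a_{ki}b_{jl}$, so $M_n(v_F)(\bar\sigma(e_{ij}))=\min_k v_F(a_{ki})+\min_l v_F(b_{jl})$. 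Taking $i=j=1$ and using $M_n(v_F)(\bar\sigma(e_{11}))=M_n(v_F)(e_{11})=0$ yields the balance relation $\min_k v_F(a_{k1})+\min_l v_F(b_{1l})=0$.

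Now I would evaluate $\bar\sigma$ on the rank-one elements $\lambda e_{11}$ for $0\neq\lambda\in F$. Since $M_n(\tau)(\lambda e_{11})=\tau(\lambda)e_{11}$, the same entrywise computation gives $M_n(v_F)(\bar\sigma(\lambda e_{11}))=\min_k v_F(a_{k1})+v_F(\tau(\lambda))+\min_l v_F(b_{1l})=v_F(\tau(\lambda))$, the unknown contribution of $\bar a$ cancelling by the balance relation. On the other hand, compatibility gives $M_n(v_F)(\bar\sigma(\lambda e_{11}))=M_n(v_F)(\lambda e_{11})=v_F(\lambda)$. Hence $v_F(\tau(\lambda))=v_F(\lambda)$ for every $\lambda$, so $\tau$ preserves the valuation $v_F$, and therefore $\tau(D)=\{\lambda: v_F(\lambda)\geq 0\}=D$; combined with the first paragraph this finishes the proof.

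The one genuinely delicate point is resisting the temptation to deduce $\tau(D)=D$ from uniqueness of the extension of the valuation from $Z(F)$ to $F$, which would require $F$ to be finite-dimensional over its centre, an assumption not available under (H1--3). Testing compatibility simultaneously on $e_{11}$ and on $\lambda e_{11}$ is exactly the device that sidesteps this, since the contribution of the inner factor $\bar a$ is identical in the two computations and cancels. The remaining work is routine bookkeeping for $d>1$: there $M_n(\tau)$ carries the matrix units of the $i$th block to those of the $\rho(i)$th block, so $a_{k1}$ and $b_{1l}$ should be replaced by their projections into $F_{\rho(i)}$, where $v_{F_{\rho(i)}}$ is genuinely multiplicative; running the argument in each block shows $\tau(D_i)=D_{\rho(i)}$ and hence $\tau(D)=D$.
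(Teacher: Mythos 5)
Your argument is correct, but it reaches the conclusion by a genuinely different route from the paper's. The paper's own proof is soft and topological: since $\eta$ is inner it preserves each minimal ideal $A_i$, so $M_n(\tau)^\iota$ must send $A_i$ to $A_{i+1}$ and hence $\tau$ sends $F_i$ to $F_{i+1}$; continuity of $\sigma$ and $\eta$ then makes $\tau$ continuous, which forces $\tau(D_i) = D_{i+1}$, and finally $\eta = \sigma \circ M_n(\tau^{-1})^\iota$ preserves $\O$ because $\sigma$ does. You perform the same final reduction (your first paragraph is exactly the paper's remark that (H1)+(H2) force $\sigma(\O) = \O$), but you replace the continuity step by a quantitative computation: testing compatibility of $\sigma$ on $e_{11}$ and on $\lambda e_{11}$, with the contribution of the inner factor cancelling against the balance relation, you conclude $v_F(\tau(\lambda)) = v_F(\lambda)$ exactly. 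This buys strictly more than the paper extracts at this point: you show $\tau$ is value-preserving --- the automorphism half of the compatibility of $(\tau,\theta)$ with $v_F$ that Theorem A later asserts --- rather than merely $\tau(D) = D$, and you avoid any implicit topological characterisation of $D_i$ inside $F_i$. The costs are the entrywise bookkeeping and one fact you use without justification, namely that $v_F$ is genuinely multiplicative (not merely submultiplicative) on the division ring $F$: this is true here, since by Proposition \ref{propn: ideals in DVRs} every nonzero element of $F$ has the form $u\pi^k$ with $u \in D^\times$ and $\pi$ normal, so that products of such elements again have this form; but it deserves a line, as it is also what makes your exact computation of $M_n(v_F)(\bar{a}\,e_{ij}\,\bar{a}^{-1})$ legitimate.

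One precision for the case $d > 1$, which the lemma must cover since only (H1--3) are in force: the balance relation obtained from $e_{11}$ alone yields only $\min_j(\alpha_j + \beta_j) = 0$, where $\alpha_j = \min_k v_F(a^{(j)}_{k1})$ and $\beta_j = \min_l v_F(b^{(j)}_{1l})$ are computed in the $j$th block --- that is, each $\alpha_j + \beta_j \geq 0$ with equality in \emph{some} block, which is not enough. To get equality in \emph{every} block you must test compatibility on the elements $1_{F_i}e_{11}$ supported at the central idempotents, whose images land in single blocks, and then use the fact that the induced permutation of blocks is a bijection. This is what your closing sentence about ``projections into $F_{\rho(i)}$'' amounts to, so your blockwise sketch does go through and gives $\tau(D_i) = D_{\rho(i)}$, hence $\tau(D) = D$, as claimed; just be sure to derive the balance relation blockwise from $1_{F_i}e_{11}$ rather than from $e_{11}$ itself.
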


\begin{proof}
Since $\eta$ is an inner automorphism of $Q$, it will preserve each $A_i$. But $\sigma(A_i) = A_{i+1}$ (with indices interpreted modulo $d$), so $M_n(\tau)^\iota$ must send $A_i$ to $A_{i+1}$, and hence $\tau$ sends $F_i$ to $F_{i+1}$. However, since $\sigma$ and $\eta$ are continuous, it follows that $\tau$ is continuous, and so $D_{i+1} = \O_{i+1} \cap F_{i+1} = \tau(D_i)$, hence $M_n(\tau)^\iota$ preserves $\O$. Now it follows that $\eta = \sigma \circ M_n(\tau^{-1})^{\iota}$ preserves $\O$.
\end{proof}

Our aim in this subsection is to ``untwist" $\eta$ by making a change of variables $x \mapsto x'$, i.e. find an element $x'\in \O[[x; \sigma, \delta]]$ such that
$$\O[[x; \sigma, \delta]] = \O[[x'; M_n(\tau)^\iota, \delta']].$$
By Theorem \ref{B}(i), it would suffice if $v(a) = v(a^{-1}) = 0$: this would imply that $a, a^{-1}\in \O$, and we could then set $x' = a^{-1}x$, giving $\delta' = a^{-1}\delta$ as in \S \ref{subsec: reparametrising skew polynomials}.

Of course, in general, $a$ will \emph{not} necessarily have this property: for instance, if $\O$ is a complete discrete valuation ring with central uniformiser $\pi$, then $\c_a = \c_{\pi^r a}$ for all $r\in\mathbb{Z}$, and $v(\pi^r a)$ will usually not be zero. Surprisingly, this naive obstruction is the only kind of obstruction that occurs.

\begin{propn}\label{propn: reducing to case sigma = M_n(tau)}
There exist an element $b\in Q^\times$, an inner automorphism $\eta' = \c_b$ of $Q$, and an automorphism $\tau'$ of $F$ such that $\sigma = \eta' \circ M_n(\tau')^\iota$ and $v(b) = v(b^{-1}) = 0$.
\end{propn}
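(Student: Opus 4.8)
The plan is to begin from the Cauchon--Robson decomposition $\sigma=\eta\circ M_n(\tau)^\iota$ furnished by Property \ref{props: cauchon-robson facts about sigma and delta}.2, where $\eta=\c_a$ for some $a\in Q^\times$ and $\tau\in\Aut(F)$, and to show that the only way $a$ can fail to satisfy $v(a)=v(a^{-1})=0$ is through a \emph{scalar} factor that can be absorbed into the $\tau$-slot. The starting observation is Lemma \ref{lem: eta and M_n(tau) preserve O}: since $\eta=\c_a$ preserves $\O$, we have $a\O a^{-1}=\O$, so $a\O=\O a$ is an invertible two-sided fractional $\O$-ideal of $Q$ (it is free of rank one as a right $\O$-module via $w\mapsto aw$). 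The whole argument rests on classifying such ideals.

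Transporting along the filtered isomorphism $\iota$, I may assume $Q=M_n(F)$ and $\O=M_n(D)$. I would then argue that every invertible two-sided fractional ideal of $M_n(D)$ is scalar. Indeed, the two-sided (fractional) ideals of a matrix ring $M_n(R)$ are exactly the $M_n(\mathfrak b)$ for two-sided (fractional) ideals $\mathfrak b$ of $R$; and since $D=\prod_i D_i$ is a product of complete discrete valuation rings, the fractional form of Proposition \ref{propn: ideals in DVRs} (using that each $D_i$ has a totally ordered ideal lattice, Lemma \ref{lem: right ideals in DVRs}(iii)) shows that each invertible such $\mathfrak b$ has the form $\xi D=D\xi$ for a normal element $\xi=(\pi_i^{r_i})_i\in F^\times$. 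Consequently $a\,M_n(D)=\xi I_n\cdot M_n(D)$, where the scalar matrix $z:=\xi I_n$ is normal in $M_n(D)$.

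With this in hand the rearrangement is quick. From $a\O=\O a=z\O=\O z$ I can factor $a=u'z$ with $u'\in\O^\times$ (the unit property of $u'=az^{-1}$ follows from the fact that the inverse ideal of $a\O=z\O$ is simultaneously $a^{-1}\O$ and $z^{-1}\O$). The key computation is that conjugation by the scalar matrix $z=\xi I_n$ acts entrywise as conjugation by $\xi$ on $F$, so that $\c_z=M_n(\c_\xi)^\iota$ with $\c_\xi\in\Aut(F)$ an inner automorphism of $F$. Hence, using that $\alpha\mapsto M_n(\alpha)^\iota$ is a group homomorphism $\Aut(F)\to\Aut(Q)$,
\[
\sigma=\c_a\circ M_n(\tau)^\iota=\c_{u'}\circ M_n(\c_\xi)^\iota\circ M_n(\tau)^\iota=\c_{u'}\circ M_n(\c_\xi\circ\tau)^\iota.
\]
Setting $b=u'$ and $\tau'=\c_\xi\circ\tau$ finishes the proof: $b\in\O^\times$ forces $v(b)=v(b^{-1})=0$ (since $v(b),v(b^{-1})\geq 0$ and $0=v(1)\geq v(b)+v(b^{-1})$), and $\tau'\in\Aut(F)$.

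I expect the ideal-theoretic step to be the main obstacle: pinning down that every invertible two-sided fractional ideal of $\O$ is a scalar ideal $z\O$ with $z=\xi I_n$, and that conjugation by such a scalar lands inside the subgroup $\{M_n(\alpha)^\iota:\alpha\in\Aut(F)\}$ rather than being a genuinely ``non-diagonal'' inner automorphism. Once these are secured, the remaining manipulations — transporting along $\iota$, verifying that $u'$ is a unit, and merging the two $M_n(-)^\iota$ factors — are routine.
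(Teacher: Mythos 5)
Your proposal is correct and is essentially the paper's own argument in different packaging: both factor $a$ as a unit $b\in\O^\times$ (whence $v(b)=v(b^{-1})=0$, by your argument) times a scalar matrix of uniformiser powers $\Pi=(\pi_1^{k_1}I_1,\dots,\pi_d^{k_d}I_d)$, and both then absorb $\c_\Pi$ into the matrix slot via $\tau'=\c_\xi\circ\tau$, using that conjugation by a scalar matrix acts entrywise. Where the paper extracts the exponents directly as $k_i=v(a_i)$, writes $a_i=b_i\tilde\pi_i^{k_i}$, and deduces that $b_i$ is a unit from the two-sidedness of $b_i\O_i$ together with Proposition \ref{propn: ideals in DVRs}(ii) and Morita equivalence, you reach the same factorisation by first classifying the invertible two-sided fractional ideal $a\O=\O a$ as $\xi I_n\cdot\O$ --- but this classification rests on exactly the same inputs (normality of $\tilde\pi_i$ from Corollary \ref{cor: localising SPS rings at pi}, Proposition \ref{propn: ideals in DVRs}(ii), Morita, and a common denominator from $F=D\mathcal{S}^{-1}$), so the two proofs coincide in substance.
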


\begin{proof}
Suppose $a = (a_1, \dots, a_d)\in Q^\times$: then, by Lemma \ref{lem: eta and M_n(tau) preserve O}, we have $a_i\O_i a_i^{-1} = \O_i$ for each $i$. Let $k_i = v(a_i)$, i.e. $a_i\in J(\O_i)^{k_i} \setminus J(\O_i)^{k_i + 1}$. So, if $\pi_j$ is a uniformiser of $D_j$, $I_j$ is the identity matrix of $M_n(D_j)$, and $\tilde{\pi}_j := \iota^{-1}(\pi_j I_j)$, then we have $a_j = b_j \tilde{\pi}_j^{k_j}$ for some $b_j\in \O_j$ with $v(b_j) = 0$. Set $b = (b_1, \dots, b_d)$, so that $v(b) = 0$.

By Corollary \ref{cor: localising SPS rings at pi}(ii), $\tilde{\pi}_j$ is normal in $\O_j$, so the right ideal $b_j\O_j$ is in fact a two-sided ideal. Moreover, by Proposition \ref{propn: ideals in DVRs}(ii) and Morita equivalence, as the ideal $b_j\O_j$ contains the element $b_j$ of value $0$, it must be equal to $\O_j$. Hence $b_j$ is a \emph{unit} in $\O_j$, and we have $v(b_j) = v(b_j^{-1}) = 0$.

Now set $\eta'(r) = brb^{-1}$ for all $r\in Q$ and $\tau'(s) = \Pi \tau(s) \Pi^{-1}$, where $\Pi = (\pi_1^{k_1} I_1, \dots, \pi_d^{k_d} I_d)$. The claim now follows from a short calculation.
\end{proof}

\subsection{Untwisting inner $\sigma$-derivations}\label{subsec: untwisting inner derivations}

In this subsection, let $(R,f)$ be an arbitrary $\mathbb{Z}$-filtered ring admitting a compatible skew derivation $(\sigma,\delta)$, and write the $f$-level sets of $R$ as $F_n R$ for $n\in \mathbb{Z}$. We will also suppose that
\begin{itemize}
\item $R = M_n(A)$ for some ring $A$,
\item $f = M_n(g)$ for some filtration $g$ on $A$,
\item $\sigma = M_n(\tau)$ for some automorphism $\tau$ of $A$, and
\item $\delta = M_n(\theta) + \varepsilon$, where $\theta$ is a $\tau$-derivation of $A$ and $\varepsilon$ is an \emph{inner} $\sigma$-derivation of $R$, say $\varepsilon = \d_{\sigma,u}$ for some $u\in R$.
\end{itemize}
(Compare Property \ref{props: cauchon-robson facts about sigma and delta}.3.) We will write the standard set of matrix units in $R$ as $\{e_{ij}\}_{1\leq i,j\leq n}$.

Our aim in this subsection is to ``untwist" $\varepsilon$ by making a change of variables $x \mapsto x'$, i.e. find an element $x'\in R[[x; M_n(\tau), \delta]]$ such that
$$R_{\geq 0}[[x; M_n(\tau), \delta]] = R_{\geq 0}[[x'; M_n(\tau), M_n(\theta)]]$$
where of course, $R_{\geq 0}$ is the positively filtered subring of $R$. As before, we would be done by Theorem \ref{B}(ii) if we had $f(u) \geq 1$. This is also unreasonable to expect, albeit this time for slightly less obvious reasons: for instance, if $A$ is the division ring of fractions of a complete discrete valuation ring with uniformiser $\pi$, then $M_n(\theta) + \d_{M_n(\tau),u} = M_n(\theta + \d_{\tau,\pi^r}) + \d_{M_n(\tau),u-\pi^r I}$ for all $r\in \mathbb{Z}$, and $v(u - \pi^r I)$ can be less than $1$. However, again, under mild conditions this is the only obstruction that occurs.

\begin{propn}\label{propn: can untwist inner derivations given a matrix filtration}
In the above setup, there exist an element $u'\in R$, a $\tau$-derivation $\theta'$ of $A$, and an inner $\sigma$-derivation $\varepsilon' = \inn(u')$ of $R$ such that $\delta = M_n(\theta') + \varepsilon'$ and $f(u') \geq 1$.
\end{propn}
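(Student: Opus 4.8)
The plan is to exploit the freedom in the decomposition $\delta = M_n(\theta) + \varepsilon$: one may shift the parameter $u$ by any ``scalar'' matrix and compensate by altering $\theta$. Concretely, for any $c\in A$, writing $cI$ for the scalar matrix with $c$ down the diagonal, a direct entrywise computation gives $\d_{\sigma,cI} = M_n(\d_{\tau,c})$ (this holds for arbitrary $c$, central or not, since $(cI)M = (cM_{kl})$ while $M_n(\tau)(M)(cI) = (\tau(M_{kl})c)$). As inner $\sigma$-derivations are additive in their parameter, this yields
\begin{equation*}
\delta = M_n(\theta) + \d_{\sigma,u} = M_n\bigl(\theta + \d_{\tau,c}\bigr) + \d_{\sigma,\,u-cI}
\end{equation*}
for every $c\in A$, and $\theta + \d_{\tau,c}$ is again a $\tau$-derivation of $A$. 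Hence it suffices to produce a single $c\in A$ with $f(u-cI)\geq 1$; we may then set $u' = u-cI$, $\theta' = \theta + \d_{\tau,c}$ and $\varepsilon' = \d_{\sigma,u'}$.

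The crux, and where I expect the genuine work to lie, is to extract enough information about $u$ from the compatibility of $(\sigma,\delta)$. The idea is to test $\delta$ against the matrix units $e_{ij}$, each of which satisfies $f(e_{ij}) = 0$, so that compatibility forces $f(\delta(e_{ij}))\geq 1$. Since $\theta(1)=0$ and $\tau(1)=1$, we have $M_n(\theta)(e_{ij}) = 0$ and $\sigma(e_{ij}) = e_{ij}$, whence
\begin{equation*}
\delta(e_{ij}) = \d_{\sigma,u}(e_{ij}) = u e_{ij} - e_{ij} u = [u,e_{ij}].
\end{equation*}
Writing $u = (u_{kl})$ and reading off entries via $u e_{ij} = \sum_k u_{ki}e_{kj}$ and $e_{ij}u = \sum_l u_{jl}e_{il}$, one finds that $[u,e_{ij}]$ has $(k,j)$-entry $u_{ki}$ for $k\neq i$, $(i,l)$-entry $-u_{jl}$ for $l\neq j$, and $(i,j)$-entry $u_{ii}-u_{jj}$. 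The condition $f([u,e_{ij}])\geq 1$ therefore forces $g(u_{ki})\geq 1$ for all $k\neq i$ and $g(u_{ii}-u_{jj})\geq 1$. Letting $i,j$ range over all indices, this says precisely that every off-diagonal entry of $u$ has $g$-value at least $1$, and that any two diagonal entries of $u$ are congruent modulo the set of elements of positive $g$-value.

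Granting these two constraints, the conclusion is immediate: set $c = u_{11}\in A$. Then $u' := u - cI$ has off-diagonal entries equal to those of $u$, hence of $g$-value $\geq 1$, and diagonal entries $u_{ii}-u_{11}$, again of $g$-value $\geq 1$ by the congruence. Since $f = M_n(g)$, this gives $f(u') = \min_{k,l} g(u'_{kl}) \geq 1$, as required. Thus the main obstacle is the compatibility computation of the preceding paragraph; once the off-diagonal entries and diagonal differences are known to have positive value, clearing the diagonal by the scalar $c$ is routine. (When $n=1$ there are no off-diagonal entries and one simply takes $c = u_{11}$, giving $u'=0$, so the statement is trivial in that case.)
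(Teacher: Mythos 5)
Your proof is correct and takes essentially the same route as the paper's: evaluating $\delta$ on the matrix units $e_{ij}$ (where $\sigma(e_{ij})=e_{ij}$ and $M_n(\theta)(e_{ij})=0$) forces the off-diagonal entries of $u$ and the differences $u_{ii}-u_{jj}$ to have positive $g$-value, after which one subtracts the scalar matrix $u_{11}I$ and absorbs the correction into the derivation. Your standalone identity $\d_{\sigma,cI}=M_n(\d_{\tau,c})$ is exactly the paper's implicit definition of $\theta'(a)=\theta(a)+u_{11}a-\tau(a)u_{11}$, just stated in a slightly more systematic form.
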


\begin{proof}
Write $u = \sum_{i,j} u_{ij} e_{ij}$ for some coefficients $u_{ij}\in A$. Let $1\leq p,q\leq n$ be arbitrary, and consider the matrix unit $e_{pq}\in R$. By assumption, $\delta(e_{pq})\in F_1R$, and so
\begin{align}\label{eqn: epsilon of matrix units}
\varepsilon(e_{pq}) \equiv -M_n(\theta)(e_{pq}) \mod F_1R.
\end{align}
But we can calculate the left-hand side of this congruence (\ref{eqn: epsilon of matrix units}) explicitly as
$$\varepsilon(e_{pq}) = \sum_{i,j} (u_{ij}e_{ij}e_{pq} - e_{pq}u_{ij}e_{ij}) = \sum_i u_{ip}e_{iq} - \sum_j u_{qj}e_{pj},$$
and the right-hand side of (\ref{eqn: epsilon of matrix units}) is just $-\theta(1_A)e_{pq}$, which is zero. So we may rewrite (\ref{eqn: epsilon of matrix units}) as $\sum_i u_{ip}e_{iq} - \sum_j u_{qj}e_{pj} \equiv 0 \mod F_1R$, and equate corresponding entries, to get
\begin{align*}
\begin{cases}
u_{ip} \in F_1A & i \neq p\\
u_{qj} \in F_1A & j \neq q\\
u_{pp} - u_{qq} \in F_1A,
\end{cases}
\end{align*}
and so, as $p$ and $q$ were arbitrary, we get $u \equiv u_{11}1_R \mod F_1R$.

Now setting $u' := u - u_{11}1_R$, and defining $\theta'(a) := \theta(a) + u_{11}a - \tau(a)u_{11}$ and $\varepsilon' := \d_{M_n(\tau),u'}$, we are done.
\end{proof}

Upshot: in this case, using Theorem \ref{B}(i) we can pass to the case when $\delta = M_n(\theta)$.

\textit{Proof of Theorem \ref{A}.} Firstly, Proposition \ref{propn: reducing to case sigma = M_n(tau)} shows that there exist some $\tau\in\Aut(F)$ and some $b\in Q^\times$ satisfying $v(b) = v(b^{-1}) = 0$ such that $\sigma = \c_b \circ M_n(\tau)^\iota$, and both of these preserve $\O$ by the same argument as in Lemma \ref{lem: eta and M_n(tau) preserve O}. So by Theorem \ref{B}(i), we may set $x' = b^{-1}x$ to get $\O[[x;\sigma,\delta]] = \O[[x'; M_n(\tau)^\iota, \delta']]$ for the $M_n(\tau)^\iota$-derivation $\delta' := b^{-1} \delta$.

Secondly, Proposition \ref{propn: can untwist inner derivations given a matrix filtration} shows that there exist some $\tau$-derivation $\theta$ of $F$ and some $u\in Q$ satisfying $v(u) \geq 1$ such that $\delta' = M_n(\theta)^\iota + \d_{M_n(\tau)^\iota,u}$. So by Theorem \ref{B}(ii), we may set $x'' = x' - u$ to get $\O[[x'; M_n(\tau)^\iota, \delta']] = \O[[x''; M_n(\tau)^\iota, M_n(\theta)^\iota]]$.

Finally, the maps
\begin{align*}
\O[[x''; M_n(\tau)^\iota, M_n(\theta)^\iota]]&\to M_n(D)[[y; M_n(\tau), M_n(\theta)]]\\
\sum_{i\geq 0} q_i (x'')^i&\mapsto \sum_{i\geq 0} \iota(q_i) y^i
\end{align*}
and
\begin{align*}
M_n(D)[[y; M_n(\tau), M_n(\theta)]] &\to M_n(D[[z; \tau, \theta]])\\
\sum_{i\geq 0} \left(\sum_{j,k=1}^n c_{ijk}e_{jk}\right) y^i &\mapsto \sum_{j,k=1}^n \left(\sum_{i\geq 0} c_{ijk} z^i\right) e_{jk}
\end{align*}
can now be checked to be filtered ring isomorphisms, and $v_\O = M_n(v_D)\circ \iota$. \qed%

\section{Applications}\label{sec: applications}

Throughout this section, we assume our data $(Q,\O,v_Q)$ satisfies Hypotheses (H1--3) + (S).

\subsection{Polynomial elements}\label{subsec: polynomial elements}

\begin{defn}
A \emph{polynomial element} of $R[[x]]$ (resp. $R[[x;\sigma,\delta]]$) is an element of $R[x]$ (resp. $R[x;\sigma,\delta]$).
\end{defn}

We first recall the following important generalisation of the Weierstrass preparation theorem for skew power series rings, essentially due to Venjakob.

\begin{thm}\label{thm: right ideals of the SPS ring over D contain polynomials}
Every nonzero right ideal of $D[[y; \tau, \theta]]$ contains a nonzero polynomial element.
\end{thm}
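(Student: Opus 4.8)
The plan is to combine a reduction that strips off powers of the uniformiser with a Weierstrass-type division argument modulo $\pi$. Write $S = D[[y;\tau,\theta]]$, with its standard separated filtration $f$. Recall that $\pi$ is a normal element of $S$ by Corollary~\ref{cor: localising SPS rings at pi}(i), that left multiplication by $\pi$ is injective (as $D$ is a domain and $\pi\in D$ acts coefficientwise), and that $\bigcap_{m\geq 0}\pi^m S = 0$, since $\pi^m S \subseteq F_{\geq m}S$ and $f$ is separated. Given a nonzero right ideal $I$, the intersection condition lets me choose the largest $N$ with $I\subseteq \pi^N S$; normality of $\pi$ then makes $I_0 := \pi^{-N}I$ a right ideal of $S$ with $I_0\not\subseteq \pi S$, and any nonzero polynomial $P\in I_0$ yields the nonzero polynomial $\pi^N P\in I$. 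So I may assume from the outset that $I\not\subseteq\pi S$.

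Next I would identify the quotient $\overline{S}:=S/\pi S$. Compatibility of $(\tau,\theta)$ with the $J(D)$-adic filtration (Definition~\ref{defn: compatible}) forces $\theta(D)\subseteq \pi D$, so the induced derivation on $k:=D/\pi D$ vanishes and $\overline{S}\cong k[[y;\overline{\tau}]]$, a skew power series ring over a division ring with zero derivation. This $\overline{S}$ is a complete local domain in which $y$ is normal and every nonzero element equals $y^m\times(\text{unit})$; hence its right ideals are exactly the $y^m\overline{S}$. Reducing $I_0$ modulo $\pi$ therefore gives $\overline{I_0}=y^n\overline{S}$ for some $n\geq 0$, and lifting $y^n\in\overline{I_0}$ produces a \emph{distinguished} element $g\in I_0$ with $\overline{g}=y^n$; explicitly $g=\sum_i a_iy^i$ with $a_n\in D^\times$ and $a_i\in\pi D$ for $i\neq n$.

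Finally I would extract a polynomial by Weierstrass division. Reducing modulo $\pi$, one has the decomposition $\overline{S} = y^n\overline{S}\oplus \overline{S}_{<n}$ with $\overline{S}_{<n}=\bigoplus_{i<n}k\,y^i$, so division by $\overline{g}=y^n$ is immediate in $\overline{S}$. The skew Weierstrass division theorem (essentially Venjakob \cite{venjakob}), whose content is precisely that this decomposition lifts through the $\pi$-adic filtration by completeness of $D$, then supplies for the distinguished element $g$ a unique expression $h = gq + r$ (with $q\in S$ and $r\in S_{<n}=\bigoplus_{i<n}Dy^i$) for every $h\in S$; here I would use the handedness in which $g$ is a \emph{left} factor, which is the one compatible with the right ideal $I_0$ and which is available by the left--right symmetry of the setup. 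Applying this to $h=y^n$ yields $gq = y^n - r$, a nonzero polynomial of degree $n$ lying in $gS\subseteq I_0$; multiplying by $\pi^N$ returns a nonzero polynomial in $I$, as required.

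The main obstacle is the division step itself: for power series $g$ and $h$ it produces a power series $q$ whose product $gq$ is an honest polynomial, which is false without completeness (cf. Example~\ref{ex: when reparametrisation doesn't work}). The decomposition is trivial in $\overline{S}$; the real work is bootstrapping it up the $\pi$-adic filtration, and I would either invoke Venjakob's theorem after checking its hypotheses (compatible filtration, complete coefficient ring, distinguished element) or reprove it via the standard successive-approximation argument, using that $\pi$ is a non-zero-divisor and that $S$ is complete. Everything else — normality of $\pi$, the identification $\overline{S}\cong k[[y;\overline{\tau}]]$, and the vanishing of the induced derivation — is routine from the hypotheses already in force.
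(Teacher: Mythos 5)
Your proof is correct and runs on the same engine as the paper's: Venjakob's skew Weierstrass division/preparation theorem \cite{venjakob} combined with the normality of $\pi$ in $D[[y;\tau,\theta]]$ from Corollary~\ref{cor: localising SPS rings at pi}(i). The paper reaches the conclusion more directly --- it factors an arbitrary nonzero $r \in A$ as $r = Pu\pi^m = P\pi^m u'$ using a right-hand version of the preparation theorem, so that $P\pi^m \in A$ immediately --- whereas your global reduction $I_0 = \pi^{-N}I$, the identification $S/\pi S \cong k[[y;\overline{\tau}]]$ with its classification of right ideals, and the division of $y^n$ by a distinguished lift $g$ are a correct but avoidable re-derivation of that same factorization.
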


\begin{proof}
Let $v$ be the $J(D)$-adic filtration and $\pi$ a uniformiser for $D$. Then every nonzero element $r\in D[[y; \tau, \theta]]$ can be written as $r = (s_0 + s_1 y + s_2 y^2 + \dots)\pi^m$, where all $s_i\in D$ and $\inf_{i\geq 0}\{v(s_i)\} = 0$, by Corollary \ref{cor: localising SPS rings at pi}(i). Hence $s = s_0 + s_1 y + s_2 y^2 + \dots$ satisfies the hypotheses of \cite[Theorem 3.1]{venjakob}, and so a right-hand version of \cite[Corollary 3.2]{venjakob} tells us that $s$ can be expressed uniquely as $s = Pu$, where $u\in D[[y; \tau, \theta]]^\times$ and $P\in D[y; \tau, \theta]$.

In particular, if $A$ is a nonzero right ideal of $D[[y;\tau,\theta]]$, then given any nonzero $r\in A$, we can write it as $r = Pu\pi^m$ as above. Since $\pi$ is normal in $D[[y;\tau,\theta]]$ by Corollary \ref{cor: localising SPS rings at pi}(i), this is just $r = P\pi^m u'$ for some unit $u'\in D[[y;\tau,\theta]]$, and hence the polynomial element $P \pi^m$ is also an element of $A$.
\end{proof}

\textit{Remark.} Corollary \ref{cor: localising SPS rings at pi}(i) also implies a similar result for $F\otimes_D D[[y;\tau,\theta]]$.

\begin{cor}\label{cor: ideals of the SPS ring over O contain polynomials}
Every nonzero (two-sided) ideal of $\O[[x;\sigma,\delta]]$ contains a nonzero polynomial element.
\end{cor}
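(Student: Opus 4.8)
The plan is to transport the statement across the isomorphism $\varphi\colon \O[[x;\sigma,\delta]] \to M_n(D[[y;\tau,\theta]])$ of Theorem \ref{A} and reduce to the right-ideal version already established in Theorem \ref{thm: right ideals of the SPS ring over D contain polynomials}. Write $S := D[[y;\tau,\theta]]$. Given a nonzero two-sided ideal $I \lhd \O[[x;\sigma,\delta]]$, its image $\varphi(I)$ is a nonzero two-sided ideal of the full matrix ring $M_n(S)$. By the standard description of ideals in a matrix ring (equivalently, by Morita equivalence), every two-sided ideal of $M_n(S)$ has the form $M_n(J)$ for a uniquely determined two-sided ideal $J \lhd S$; since $\varphi$ is an isomorphism and $I \neq 0$, we get $J \neq 0$.

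Next I would use that a nonzero two-sided ideal is in particular a nonzero right ideal, so Theorem \ref{thm: right ideals of the SPS ring over D contain polynomials} supplies a nonzero polynomial element $P \in D[y;\tau,\theta]$ lying in $J$. Placing $P$ in the $(1,1)$-entry yields an element $P e_{11} \in M_n(J) = \varphi(I)$ which is nonzero and polynomial, i.e.\ lies in the subring $M_n(D[y;\tau,\theta])$. Pulling back, $\varphi^{-1}(P e_{11})$ is then the required nonzero polynomial element of $\O[[x;\sigma,\delta]]$.

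The one point that genuinely needs checking --- and the only place where the \emph{construction} of $\varphi$ is used rather than its mere existence --- is that $\varphi$ and $\varphi^{-1}$ carry polynomial elements to polynomial elements, so that $\varphi^{-1}(Pe_{11})$ really lies in $\O[x;\sigma,\delta]$ and not merely in the completion. Here I would invoke the explicit form of $\varphi$ recorded in Theorem \ref{A}: since $y$ is the image of $ax - t$ with $a \in \O^\times$ and $t \in J(\O)$, the equalities of polynomial rings $\O[x;\sigma,\delta] = \O[ax;\ldots] = \O[ax - t;\ldots]$ furnished by the reparametrisations of \S\ref{subsec: reparametrising skew polynomials}, followed by the matrix identifications at the end of the proof of Theorem \ref{A}, show that $\varphi$ restricts to a ring isomorphism $\O[x;\sigma,\delta] \to M_n(D[y;\tau,\theta])$ of the polynomial subrings, with inverse again preserving polynomiality. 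Granting this bookkeeping, the argument is complete, and there is no substantive obstacle: all the real content is already carried by Theorems \ref{A} and \ref{thm: right ideals of the SPS ring over D contain polynomials}.
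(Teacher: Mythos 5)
Your proof is correct and is essentially the paper's own argument: the paper likewise transports the ideal through the isomorphism $\varphi$ of Theorem \ref{A}, invokes Theorem \ref{thm: right ideals of the SPS ring over D contain polynomials} together with the Morita/matrix-ideal correspondence, and uses the explicit form $y = \varphi(ax-t)$ with $a\in\O^\times$, $t\in J(\O)$ to see that polynomial elements pull back to polynomial elements. The only (immaterial) difference is that the paper pulls back the scalar matrix $PI$ where you use $Pe_{11}$, and your final paragraph simply spells out the polynomiality bookkeeping that the paper leaves implicit.
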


\begin{proof}
Theorem \ref{A} tells us that there exists an isomorphism $\varphi: \O[[x;\sigma,\delta]] \to M_n(D[[y; \tau, \theta]])$ extending $\iota$, such that $y = \varphi(ax-t)$. In particular, if $P\in D[[y; \tau, \theta]]$ is polynomial in the variable $y$, then $\varphi^{-1}(PI)$ (where $I$ is the identity matrix) is polynomial in the variable $x$. The result now follows from Theorem \ref{thm: right ideals of the SPS ring over D contain polynomials} and by Morita equivalence.
\end{proof}

\textit{Proof of Theorem \ref{C}.} The first statement is simply restating Corollary \ref{cor: ideals of the SPS ring over O contain polynomials}.

For the second statement, take a nonzero ideal $I$ of $Q\otimes_\O \O[[x;\sigma,\delta]]$. Then $J := I\cap \O[[x; \sigma, \delta]]$ is clearly an ideal of $\O[[x; \sigma, \delta]]$. By Corollary \ref{cor: localising SPS rings at pi}(ii), multiplying any nonzero element of $I$ by an appropriate power of the regular element $\pi$ will give an element of $J$, so $J \neq 0$.

Hence, by the first statement, we know that $J\cap \O[x;\sigma,\delta] \neq 0$, and so $I\cap Q[x;\sigma,\delta] \neq 0$. But as we are assuming that $Q[x;\sigma,\delta]$ is simple, it must follow that $I\cap Q[x;\sigma,\delta] = Q[x;\sigma,\delta]$. In particular, $1\in I$, and so $I = Q\otimes_\O \O[[x;\sigma,\delta]]$. \qed

\subsection{Uniform dimension}\label{subsec: udim}

This subsection continues the work of \cite{letzter-wang-goldie} on uniform dimensions (Goldie ranks) of skew power series rings. As we have already remarked in the Introduction, Theorem \ref{B} sometimes allows us to reduce directly to the results of \cite{letzter-wang-goldie} when the derivation is inner. However, in the special case of Hypotheses (H1--3) + (S), we can now prove similar results about skew power series rings over $\O$ for \emph{arbitrary} derivations.

\textit{Proof of Theorem \ref{D}.} By Theorem \ref{A}, we know that $\O[[x;\sigma,\delta]] \cong M_n(D[[y; \tau, \theta]])$ for some appropriate skew derivation $(\tau, \theta)$, and hence that $\rudim(\O[[x;\sigma,\delta]]) = n(\rudim(D[[y; \tau, \theta]]))$ \cite[Example 2.11(iii)]{MR}. In the same way, $\rudim(\O) = n(\rudim(D))$, and of course $\rudim(D) = 1$ as $D$ is a noetherian integral domain \cite[Example 2.11(i)]{MR}.

It remains to show that $\rudim(D[[y;\tau,\theta]]) = 1$. So let $U$ be a uniform right ideal of $D[[y;\tau,\theta]]$, and let $I$ be an arbitrary nonzero right ideal. By Theorem \ref{thm: right ideals of the SPS ring over D contain polynomials}, $U\cap D[y;\tau,\theta] \neq 0$ and $I\cap D[y;\tau,\theta] \neq 0$, and so as $D[y;\tau,\theta]$ is a prime ring \cite[Theorem 1.2.9(iii)]{MR}, their intersection $(U\cap I)\cap D[y;\tau,\theta]$ is also nonzero. In particular, this shows that $U$ is an essential right ideal of $D[[y;\tau,\theta]]$, and so $\rudim(D[[y;\tau,\theta]]) = 1$. \qed

\bibliography{biblio}
\bibliographystyle{plain}

\end{document}